\def \vx {circle[radius = .07][fill = black]}
\def \smvx {circle[radius = .1][fill = black]}
\tikzstyle{edge}=[very thick]
\definecolor{bostonuniversityred}{rgb}{0.8, 0.0, 0.0}
\definecolor{arsenic}{rgb}{0.23, 0.27, 0.29}
\tikzstyle{diredge}=[postaction={decorate,decoration={markings,
\newcommand{\defPt}[3]{
	\def \pt {(#1, #2)}
	\coordinate [at = \pt, name = #3];
}
\tikzset{
   conn/.pic={
     \defPt{0.2}{-0.5}{q0}
     \defPt{-1}{-1.5}{q5}
    \defPt{1}{1.2}{q1}
    \defPt{1}{2.7}{q6}
    \defPt{1.25}{-1.2}{q2}
    \defPt{2.5}{0.6}{q3}
    \defPt{2.5}{-0.6}{q4}
  
        \draw[line width=1 pt] (q0) -- (q1) -- (q3) -- (q4);
        \draw[line width=1 pt] (q2) -- (q3);
        \draw[line width=1 pt] (q0) -- (q5);
        \draw[line width=1 pt] (q1) -- (q6);
  }
}
\newcommand{\fitellipsis}[3] 
{\draw []let \p1=(#1), \p2=(#2), \n1={atan2(\y2-\y1,\x2-\x1)}, \n2={veclen(\y2-\y1,\x2-\x1)}
    in ($ (\p1)!0.5!(\p2) $) ellipse [ x radius=\n2/2+0.3cm+#3cm, y radius=#3cm, rotate=\n1];
}
\newcommand{\fitellipsiss}[3] 
{\draw [fill=white]let \p1=(#1), \p2=(#2), \n1={atan2(\y2-\y1,\x2-\x1)}, \n2={veclen(\y2-\y1,\x2-\x1)}
    in ($ (\p1)!0.5!(\p2) $) ellipse [ x radius=\n2/2+#3cm, y radius=#3cm, rotate=\n1];
}
\newcommand{\fitellipsisss}[3] 
{\draw []let \p1=(#1), \p2=(#2), \n1={atan2(\y2-\y1,\x2-\x1)}, \n2={veclen(\y2-\y1,\x2-\x1)}
    in ($ (\p1)!0.5!(\p2) $) ellipse [ x radius=\n2/2+#3cm, y radius=#3cm, rotate=\n1];
}
\theoremstyle{plain}
\newtheorem*{thm*}{Theorem}
\newtheorem{thm}{Theorem}
\Crefname{thm}{Theorem}{Theorems}
\newtheorem*{lem*}{Lemma}
\newtheorem{lem}[thm]{Lemma}
\Crefname{lem}{Lemma}{Lemmas}
\newtheorem*{claim*}{Claim}
\newtheorem{claim}{Claim}
\Crefname{claim}{Claim}{Claims}
\Crefname{claim}{Claim}{Claims}
\Crefname{prop}{Proposition}{Propositions}
\Crefname{cor}{Corollary}{Corollaries}
\newtheorem{conj}[thm]{Conjecture}
\Crefname{conj}{Conjecture}{Conjectures}
\Crefname{qn}{Question}{Questions}
\Crefname{obs}{Observation}{Observations}
\Crefname{ex}{Example}{Examples}
\theoremstyle{definition}
\newtheorem{prob}[thm]{Problem}
\Crefname{prob}{Problem}{Problems}
\Crefname{defn}{Definition}{Definitions}
\newtheorem*{defn*}{Definition}
\theoremstyle{remark}
\renewenvironment{proof}[1][]{\begin{trivlist}
\item[\hspace{\labelsep}{\bf\noindent Proof#1.\/}] }{\qed\end{trivlist}}
\newenvironment{cla_proof}[1][]{\begin{trivlist}
\item[\hspace{\labelsep}{\noindent \emph{Proof#1.}\/}] }{\qed\end{trivlist}}
\newcommand{\ceil}[1]{
    \left\lceil #1 \right\rceil
}
\newcommand{\floor}[1]{
    \left\lfloor #1 \right\rfloor
}
\newcommand{\eps}{\varepsilon}
\def\expandafter\normalsize\expandafter{%
    \normalsize
    \setlength\abovedisplayskip{8pt}
    \setlength\belowdisplayskip{8pt}
    \setlength\abovedisplayshortskip{4pt}
    \setlength\belowdisplayshortskip{4pt}
}
 \setlist[itemize]{leftmargin=*}
\DeclareFontFamily{OT1}{pzc}{}
\DeclareFontShape{OT1}{pzc}{m}{it}{<-> s * [1.10] pzcmi7t}{}
\DeclareMathAlphabet{\mathpzc}{OT1}{pzc}{m}{it}
\title{\vspace{-0.8cm} Towards the Erd\H{o}s-Hajnal conjecture for $P_5$-free graphs}
\author{Pablo Blanco\thanks{Department of Mathematics, Rutgers University -- New Brunswick, New Brunswick, USA.} \and
Matija Buci\'c\thanks{School of Mathematics, Institute for Advanced Study and Department of Mathematics, Princeton University, Princeton, USA. Email: \href{mailto:matija.bucic@ias.edu} {\nolinkurl{matija.bucic@ias.edu}}.}
}
 \date{}
\begin{document}

\maketitle

\vspace{-0.5cm}
\begin{abstract}
The Erd\H{o}s-Hajnal conjecture is one of the most classical and well-known problems in extremal and structural combinatorics dating back to 1977. It asserts that in stark contrast to the case of a general $n$-vertex graph, if one imposes even a little bit of structure on the graph, namely by forbidding a fixed graph $H$ as an induced subgraph, instead of only being able to find a polylogarithmic size clique or an independent set, one can find one of polynomial size. Despite being the focus of considerable attention over the years, the conjecture remains open. In this paper, we improve the best known lower bound of $2^{\Omega(\sqrt{\log n})}$ on this question, due to Erd\H{o}s and Hajnal from 1989, in the smallest open case, namely when one forbids a $P_5$, the path on $5$ vertices. Namely, we show that any $P_5$-free $n$-vertex graph contains a clique or an independent set of size at least $2^{\Omega(\log n)^{2/3}}$. We obtain the same improvement for an infinite family of graphs.
\end{abstract} 

\section{Introduction} 

We will refer to the set of vertices spanning a clique or an independent set in a graph $G$ as a \textit{homogeneous set} and denote the largest size of such a set in $G$ by $\hom(G)$.
A \textit{pure pair} in a graph is a pair of disjoint vertex sets spanning a complete bipartite graph between them in either $G$ or its complement. We will refer to the former case as a \textit{complete pair} and the latter as an \textit{anti-complete pair}.

The following is one of the most classical open problems in extremal and structural combinatorics, due to Erd\H{o}s and Hajnal \cite{erdos-hajnal-1} from 1977.
\begin{conj}\label{conj:E-H}
For any graph $H$ there exists an $\eps>0$ such that any $n$-vertex $H$-free graph $G$ satisfies $\hom(G) \ge n^{\eps}$.
\end{conj}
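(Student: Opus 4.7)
The plan is to proceed by strong induction on $|V(H)|$, using the classical Erd\H{o}s-Hajnal equivalence between Conjecture~\ref{conj:E-H} and the stronger-looking assertion that every $n$-vertex $H$-free graph contains a pure pair $(A,B)$ with $|A|,|B| \ge n^{\eps}$. The base case $|V(H)| \le 4$ is classical: $P_4$-free graphs (cographs) admit a partition into two halves forming either a complete or anti-complete pair, which via a Ramsey-type argument gives $\hom(G) \ge \sqrt{n}$; the remaining four-vertex graphs either contain $P_4$ or have complement doing so.

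For the inductive step, fix a vertex $h \in V(H)$ and let $G$ be an $n$-vertex $H$-free graph. For every $v \in V(G)$, at least one of $N(v)$ or $V(G)\setminus (N(v)\cup\{v\})$ must be $(H-h)$-free, since otherwise a copy of $H-h$ together with $v$ playing the role of $h$ would produce an induced $H$. By the inductive hypothesis each such $(H-h)$-free set of size $m$ contains a pure pair of size at least $m^{\eps'}$. I would then attempt to iteratively refine: repeatedly pick a vertex, descend into the $(H-h)$-free half, and accumulate the resulting local pure-pair structure into a single global pure pair on $n^{\eps}$ vertices.

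The main obstacle, and the reason Conjecture~\ref{conj:E-H} has resisted proof for nearly half a century, is the accumulation step. Local pure pairs inside $N(v)$ for different $v$ do not combine cleanly, because the two sides of a pure pair sitting inside one neighborhood need not interact purely with structures built outside that neighborhood. Controlling the multiplicative losses incurred at each stage of the iteration is precisely what degrades the bound from polynomial to sub-polynomial in the original Erd\H{o}s-Hajnal argument, producing only $\hom(G) \ge 2^{\Omega(\sqrt{\log n})}$. I do not see a route around this obstacle for arbitrary $H$: one would need either a global structural decomposition theorem for $H$-free graphs (available only for isolated $H$ through the deep work of Chudnovsky, Seymour, and coauthors) or a combinatorial amplification mechanism that upgrades sub-polynomial bounds to polynomial ones. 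Consequently, the realistic content of a paper of this scope would be a strengthening for a specific $H$ — such as the $P_5$ singled out in the abstract — rather than the full conjecture, and I expect subsequent sections to pursue precisely that direction.
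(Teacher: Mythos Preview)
Your overall assessment is correct: the statement is a \emph{conjecture}, not a theorem, and the paper does not prove it. The paper explicitly states that the Erd\H{o}s--Hajnal conjecture remains open and instead proves the partial result $\hom(G) \ge 2^{\Omega((\log n)^{2/3})}$ for $P_5$-free graphs (Theorem~\ref{thm:main}). So your concluding expectation is exactly right.

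That said, the inductive sketch you outline contains a genuine error worth flagging. You claim that for every $v \in V(G)$, at least one of $N(v)$ or $V(G) \setminus (N(v) \cup \{v\})$ must be $(H-h)$-free, reasoning that otherwise a copy of $H-h$ together with $v$ in the role of $h$ yields an induced $H$. This is false as stated: for $v$ to extend a copy of $H-h$ to an induced $H$, the adjacencies from $v$ to that copy must match those of $h$ in $H$ exactly. A copy of $H-h$ lying entirely inside $N(v)$ only extends to $H$ if $h$ is adjacent to \emph{every} vertex of $H-h$; similarly for the non-neighbourhood. For a generic $h$ with mixed adjacency, neither side need be $(H-h)$-free. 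The actual Erd\H{o}s--Hajnal argument (yielding $2^{\Omega(\sqrt{\log n})}$) does not proceed via this dichotomy; it goes through finding pure pairs of polynomial size and iterating, which you allude to but conflate with the faulty neighbourhood argument. Your diagnosis of \emph{why} the iteration degrades to sub-polynomial is, however, accurate.
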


The Erd\H{o}s-Hajnal conjecture is open despite considerable attention over the years. It is, however, known to hold for a number of families of graphs. In addition, any new graph for which the conjecture is proved leads to an infinite family of graphs for which it holds via a beautiful result of Alon, Pach and Solymosi \cite{APS01}, which allows one to generate new graphs for which the conjecture is true via the substitution operation (see \Cref{sec:substitution} for more details). With this in mind, as well as with the hope that ideas developed might lead to much more general results or even lead to a full proof of \Cref{conj:E-H}, trying to prove the conjecture for specific small graphs attracted quite a lot of attention over the years, see e.g.\ the survey of Gy\'arf\'as \cite{gyarfas-survey}. In particular, by the results and observations in \cite{erdos-hajnal-2,APS01,gyarfas-survey}, the conjecture holds for all graphs on at most four vertices as well as for all graphs on five vertices apart from three special graphs (or their complements) which were explicitly mentioned in \cite{gyarfas-survey} over 25 years ago. The first of the three graphs, for which the conjecture has since been resolved, by Chudnovsky and Safra \cite{bull}, is the so-called ``bull'' graph. The second is $C_5$, for which the conjecture has been proved in a recent breakthrough paper of Chudnovsky, Scott, Seymour and Spirkl \cite{5-hole} building on an idea of Tomon \cite{tomon}, which was further developed by Pach and Tomon in \cite{pach-tomon}.  Unfortunately, despite hope in the area that solving the $C_5$ case, which was raised explicitly by Erd\H{o}s and Hajnal \cite{erdos-hajnal-2}, would pave the road to the resolution of the full conjecture, this has not yet been the case. The new approach they introduce does lead to a number of interesting, more general results surrounding the conjecture.

Given this, the focus has shifted to the only remaining five vertex graph for which the Erd\H{o}s-Hajnal conjecture is not known, namely the $P_5$, the path on five vertices. This problem has been raised explicitly by Gy\'arf\'as \cite{gyarfas-survey} over 25 years ago and its importance as the smallest open case of Erd\H{o}s-Hajnal conjecture was highlighted in a very recent work of Scott, Seymour and Spirkl \cite{scott2021polynomial}. There has been plenty of work surrounding the problem of excluding $P_5$ in addition to other graphs. For some examples, see \cite{p5-work-1,p5-work-2,p5-work-3, p5-work-4,5-hole} and references within. 

In terms of bounds, the best known bound in the Erd\H{o}s-Hajnal conjecture for $P_5$-free graphs matches the best known general bound, due to Erd\H{o}s and Hajnal \cite{erdos-hajnal-2} with the aforementioned result of Scott, Seymour and Spirkl \cite{scott2021polynomial} giving an alternative proof in the case of $P_5$ as a corollary. In particular, Erd\H{o}s and Hajnal showed in 1989 that for any $H$ any $H$-free graph $G$ satisfies $\hom(G)\ge 2^{\Omega(\sqrt{\log n})}$. In general, prior to the current work, there has been no graphs for which an improvement over this bound is known except the ones for which the full proof of the conjecture is known. The case of $C_5$ was in this class for a short time prior to the above-mentioned complete resolution of this case, thanks to a very nice paper of Chudnovsky, Fox, Scott, Seymour and Spirkl \cite{towards-5-hole} who showed an improved bound of $\hom(G)\ge 2^{\Omega(\sqrt{\log n \log \log n})}$ for any $C_5$-free graph $G$.  

The main result of this paper is a significantly stronger improvement in the $P_5$-free case of the Erd\H{o}s-Hajnal conjecture.

\begin{thm}\label{thm:main}
Any $P_5$-free $n$-vertex graph $G$ satisfies $\hom(G) \ge 2^{\Omega(\log n)^{2/3}}$. 
\end{thm}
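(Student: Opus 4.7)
The approach I would take is to reduce the theorem to a ``large pure pair'' lemma for $P_5$-free graphs and then iterate it in the standard Erd\H{o}s--Hajnal fashion. The target form of the lemma is: every $P_5$-free graph on $n$ vertices either contains a homogeneous set of the target size, or contains a pure pair $(A,B)$ with $\min(|A|,|B|) \ge n/2^{O((\log n)^{1/3})}$.

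Given such a lemma, the main theorem follows by the well-known reduction. In the complete pair case, pick $v \in A$ (which will join a growing clique) and recurse on $B$; in the anti-complete pair case, pick $v \in A$ (joining a growing independent set) and recurse on $B$. After $k$ steps the residual logarithmic size $L_k = \log n_k$ satisfies $L_k \approx L_{k-1} - L_{k-1}^{1/3}$. Modeling this as $dL/dk = -L^{1/3}$ yields $L_k^{2/3} \approx (\log n)^{2/3} - \Theta(k)$, so we may continue for $\Theta((\log n)^{2/3})$ steps before the set is exhausted. By pigeonhole at least half of these steps produced pure pairs of the same type, yielding either a clique or independent set of size $2^{\Omega((\log n)^{2/3})}$.

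All of the work then sits in the pure pair lemma. The natural structural input is the Bacs\'o--Tuza theorem: every connected $P_5$-free graph has a dominating induced clique or a dominating induced $P_3$. If the dominating subgraph is a clique that is already of the target size we are done; otherwise the graph decomposes into a few neighborhoods $N(u), N(v), N(w)$ of a dominating $P_3$, whose cross-edges and non-edges are tightly constrained by $P_5$-freeness (for instance, a non-neighbor of $v$ that has two non-adjacent neighbors on either side of the $P_3$ would extend to a $P_5$). One would hope to show that either pigeonhole on these neighborhoods already produces a pure pair of the desired size, or the problem reduces to a structurally simpler instance to which one recurses inside the lemma.

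The real obstacle is the quantitative bound $n/2^{O((\log n)^{1/3})}$ in the pure pair lemma. The natural iterative splitting into neighborhoods or substructures loses a polylogarithmic factor per level, which would only yield pure pairs of size roughly $n/\mathrm{polylog}(n)$ in well-behaved cases and would recover or barely improve the $2^{\Omega(\sqrt{\log n})}$ Erd\H{o}s--Hajnal bound. To reach the $(\log n)^{1/3}$ exponent one needs a recursion in which the depth of sub-instance refinement and the per-step loss balance out to produce a total loss of $2^{(\log n)^{1/3}}$; setting up the correct potential or weighted recursion, and showing that the $P_5$-free hypothesis is preserved well enough at each level to drive it, is where I expect the main technical difficulty to be.
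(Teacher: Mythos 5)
Your reduction step is quantitatively wrong as written: collecting a single vertex per iteration and recursing on the other side of the pure pair produces, after $\Theta((\log n)^{2/3})$ steps, a homogeneous set of size only $\Theta((\log n)^{2/3})$, which is exponentially short of $2^{\Omega((\log n)^{2/3})}$. The correct reduction recurses into \emph{both} sides of the pure pair and combines the resulting cographs (working with cographs rather than homogeneous sets, as the paper does, avoids the issue of the two recursive calls returning a clique on one side and an independent set on the other). With that fix, a pure pair lemma with both sides of size at least $n/2^{O((\log n)^{1/3})}$ would indeed yield the recurrence $g(L)\ge 1+g\bigl(L-O(L^{1/3})\bigr)$ for $g(L)=\log c(2^L)$, where $c(n)$ denotes the minimum guaranteed cograph size, and hence $g(L)=\Omega(L^{2/3})$; so the target form of your lemma is at least self-consistent with the claimed conclusion.

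The deeper problem is that the pure pair lemma you would need is far stronger than anything the paper proves, and the paper's own discussion indicates it is the wrong target. Theorem~\ref{thm:main-pair} gives both sides of size at least $n/m^{13}$ with $m=\hom(G)$; at the conjectural extremal value $m=2^{\Theta((\log n)^{2/3})}$ this is only $n/2^{O((\log n)^{2/3})}$, and feeding a symmetric pair of the form $n/m^{O(1)}$ into the recursion above recovers exactly the classical $2^{\Omega(\sqrt{\log n})}$ bound and no more, since for any exponent above $1/2$ the base case of the induction fails. As the introduction of the paper explains, improving beyond $2^{\sqrt{\log n}}$ already forces one side of the pure pair to have size $\Omega(n)$, and improving beyond $2^{\sqrt{\log n\log\log n}}$ forces one side to have size $n-o(n)$; the unbalanced blow-up of $C_5$ discussed in the concluding remarks shows one should expect to be forced into such extremely lopsided pairs rather than balanced ones with both sides $n^{1-o(1)}$, which is what your proposed bound $n/2^{O((\log n)^{1/3})}$ amounts to. The paper therefore does not route through a stronger symmetric pure pair lemma at all. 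It uses Theorem~\ref{thm:main-pair} (proved via $F_4$-supersaturation, not Bacs\'o--Tuza) only as a seed to find an anti-complete pair of medium size $n^{1-\eps_n}$ inside a vertex-minimal counterexample, and then a lengthy structural analysis around a maximum good pair iteratively peels off medium-sized sets $A_0,A_1,\dots$ covering $\Omega(n^{1-\eps_n^2})$ vertices that pairwise form pure pairs (\Cref{cl:A_i-A_j-consistent}); the desired cograph is then assembled from these pieces via \Cref{r-partite-incons-bound}. You have located the right bottleneck, but the lemma you hope for overshoots what the problem structure permits, and the genuine content of the paper is the decomposition that circumvents the need for it.
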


Let us highlight the fact that this result breaks a substantial barrier arising in a very natural approach for attacking the Erd\H{o}s-Hajnal conjecture first introduced by Erd\H{o}s and Hajnal \cite{erdos-hajnal-2}. Namely, in order to obtain their classical $2^{\Omega(\sqrt{\log n })}$ bound, they observed it is enough to find an (almost) pure pair with both sides of size at least $n/m^{O(1)},$ where $m=\hom(G)$. In order to improve the bound to $2^{\Omega(\sqrt{\log n \log \log n})}$ it is enough to find an (almost) pure pair with one part of size at least $n/m^{O(1)}$ and the other of size $\Omega(n)$. However, to improve the bound beyond this, one already needs to find an (almost) pure pair with one part of size at least $n/m^{O(1)}$ and the other of size at least $n-o(n)$.

This showcases the steep increase in difficulty involved in improving the classical bound of Erd\H{o}s and Hajnal and establishes $2^{\Omega(\sqrt{\log n \log \log n})}$ as a very natural, highly non-trivial, barrier arising when one follows this approach. We note that the improved bound in the $C_5$ case \cite{towards-5-hole}, which preceded its resolution, precisely matched this barrier and indeed followed the above-mentioned approach. In an upcoming work, the second author, Nguyen, Scott, and Seymour \cite{tung-paper}, manage to match this barrier for all graphs $H$, and hence obtain the first improvement to the classical bound of Erd\H{o}s and Hajnal which applies for all graphs.

It is not hard to extend the aforementioned result of Alon, Pach and Solymosi to show that the family of graphs $H$ for which an $n$-vertex graph $G$ being $H$-free implies $\hom(G) \ge 2^{\Omega(\log n)^{2/3}}$ is closed under substitution (see \Cref{sec:substitution}). This means that \Cref{thm:main} immediately provides us with an infinite family of graphs for which we obtain the same improvement.

Let us also highlight the following intermediate result concerning pure pairs in $P_5$-free graphs, which is the starting point of our proof of \Cref{thm:main}, and which we find to be of independent interest. 
Let us also note that due to their close relation to the Erd\H{o}s-Hajnal conjecture, pure pairs have also been extensively studied (see e.g.\ \cite{noga-pairs,thomasse-pure-pairs,pach-pairs,pure-pair-1,pure-pair-2,pure-pair-3} and references therein) and the results of this form in which we get the same size guarantee on \emph{both} parts of the pair are quite rare.

\begin{thm}\label{thm:main-pair}
Let $G$ be a $P_5$-free graph on $n$ vertices with $m:=\hom(G) < n$. Then, we can find an anti-complete pair $(A,B)$ with $|A|,|B| \geq \frac{n}{m^{13}}$. 
\end{thm}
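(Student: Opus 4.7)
My plan rests on the observation that every connected $P_5$-free graph has diameter at most $3$, since a shortest path of length $4$ would itself be an induced $P_5$. BFS from any vertex $v$ thus decomposes $V$ into layers $\{v\} \cup N_1(v) \cup N_2(v) \cup N_3(v)$, and the pair $(N_1(v), N_3(v))$ is automatically anti-complete. I would first reduce to the connected case (a disconnected graph either yields the desired pair directly by balancing components, or one component contains more than $n - n/m^{13}$ vertices and we restrict to it at negligible loss). In the connected case, the hypothesis $\alpha(G) \le m$ produces a vertex $v$ with $|N_1(v)| \ge n/m - 1$; if additionally $|N_3(v)| \ge n/m^{13}$, we are done.

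The hard case is when every high-degree $v$ has $|N_3(v)| < n/m^{13}$, so essentially all of $V$ lies in $\{v\} \cup N_1(v) \cup N_2(v)$, and the desired pair must be carved out of the bipartite-like structure between $N_1(v)$ and $N_2(v)$. Here I would exploit the following \emph{signature compatibility} observation: writing $S_u := N(u) \cap N_1(v)$ for $u \in N_2(v)$, for any two non-adjacent $u_1, u_2 \in N_2(v)$ the bipartite graph on $(S_{u_1} \setminus S_{u_2}, S_{u_2} \setminus S_{u_1})$ must be complete in $G$ -- otherwise a non-adjacent pair $x_1, x_2$ there would give an induced $P_5$ on $u_1, x_1, v, x_2, u_2$. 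Together with $v$ being universally adjacent above both sides, and the bounds $\omega(G), \alpha(G) \le m$, this biclique-with-apex structure constrains the symmetric differences $S_{u_1} \triangle S_{u_2}$.

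Given this, the strategy is to pick a reference $u^\star \in N_2(v)$ with a small signature and to take $B \subseteq N_2(v)$ large, consisting of vertices compatible with $u^\star$ (non-adjacent to it, so with signatures close to $S_{u^\star}$ up to a controlled overflow), and then set $A := N_1(v) \setminus \bigcup_{u \in B} S_u$; the pair $(A, B)$ is automatically anti-complete, and its sizes are controlled by the compatibility lemma. The main obstacle is quantitative: the biclique-with-apex argument only gives a Ramsey-type rather than a polynomial-in-$m$ bound on $|S_{u_1} \triangle S_{u_2}|$, so one must carefully iterate the lemma, combining it with an analysis of the complement-component structure of $G[N_2(v)]$ (any two distinct complement-components induce a biclique in $G$, so a transversal is a clique and there are at most $m$ of them), and handle the degenerate case where $|S_{u^\star}|$ itself is large by letting $u^\star$ play the role of a new high-degree vertex. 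Tracking the losses across these stages -- the initial $1/m$ factor, the overflow, and a bounded number of recursive steps -- is where the somewhat mysterious exponent $13$ should finally emerge as the minimal sum of individual costs.
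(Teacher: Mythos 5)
Your two structural observations are correct: a connected $P_5$-free graph has diameter at most $3$ (so $(N_1(v),N_3(v))$ is automatically anticomplete), and for non-adjacent $u_1,u_2\in N_2(v)$ the pair $(S_{u_1}\setminus S_{u_2},\,S_{u_2}\setminus S_{u_1})$ must be complete in $G$, since otherwise $u_1 x_1 v x_2 u_2$ is an induced $P_5$. But the quantitative step you flag as ``the main obstacle'' is a genuine gap, not a bookkeeping exercise. The biclique-with-apex structure only gives $\alpha\le m$ and $\omega\le 2m$ on $G[S_{u_1}\triangle S_{u_2}]$, hence only the Ramsey bound $|S_{u_1}\triangle S_{u_2}|\le R(2m+1,m+1)$, which is exponential in $m$; nothing in your setup forces a polynomial bound. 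Even granting small symmetric differences with $S_{u^\star}$, the union $\bigcup_{u\in B}S_u$ can still engulf essentially all of $N_1(v)$ once $|B|$ is polynomially large, so $A:=N_1(v)\setminus\bigcup_{u\in B}S_u$ may be empty. The proposed remedies---iterating the compatibility lemma, recursing with a new high-degree $u^\star$, invoking the fact that $G[N_2(v)]$ has at most $m$ complement-components---are stated but not carried out, and it is not evident they overcome the exponential-versus-polynomial mismatch. The claim that the exponent $13$ ``should finally emerge as the minimal sum of individual costs'' is a hope, not an argument.

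The paper's proof takes a genuinely different route that sidesteps this difficulty by counting rather than constructing. It first proves Erd\H{o}s--Hajnal for $F_4$ (a $P_3$ plus an isolated vertex, \Cref{lem:F4-free}), then uses supersaturation to find $> n^4/m^{O(1)}$ induced copies of $F_4$ and hence an edge $e$ lying in $> n^2/m^{O(1)}$ of them; this count is exactly the number of non-edges between $N_{0|1}(e)$ and $N_{00}(e)$. The key structural \Cref{cl:consistency10}---every $v\in N_{0|1}(e)$ is complete or anticomplete to each connected component of $G[N_{00}(e)]$---together with the bound of at most $m$ components of $G[N_{00}(e)]$ and a simple averaging over ``small'' and ``low-degree'' components then extracts the desired anticomplete pair, with every loss polynomial in $m$ by construction. (The paper also sketches a two-line alternative via induced copies of $P_2+P_3$ and Alon--Pach--Solymosi.) The decisive difference is that supersaturation hands you the polynomial non-edge count up front, whereas your approach tries to manufacture size control from local signature comparisons, and that control is only Ramsey-strength.
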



\textbf{Notation.}
All our logarithms are in base two. By default, whenever we say subgraph, we mean an induced subgraph. In very rare instances in which we refer to not necessarily induced subgraphs, we will refer to them as such. $\mathbb{N}$ for us denotes the set of non-negative integers.

\section{Finding a large anticomplete pair}\label{sec:anticomplete-pair}
In this section, we show how to find a large anticomplete pair of sets in a $P_5$-free graph, namely, we prove \Cref{thm:main-pair}. We note that one can prove this result by using the fact that the Erd\H{o}s-Hajnal Conjecture holds for a five-vertex graph $H_5$ which is the vertex-disjoint union of a path on two and three vertices.
Indeed, by the result of Alon, Pach and Solymosi \cite{APS01}, we have that an $n$-vertex graph without a homogeneous set of size $m$ has at least $n^5/m^{O(1)}$ copies of $H_5$. This means that there are three vertices which extend into at least $n^2/m^{O(1)}$ copies of $H_5$ where the three vertices always play the role of the same three vertices of $H_5$, which include the middle vertex of the path on three vertices. Looking at the ``candidate'' sets for the remaining two vertices, one observes their product needs to have size at least $n^2/m^{O(1)}$; so, in particular, they both need to have size at least  $n/m^{O(1)}$ and one cannot have any edges between them without creating an induced $P_5$, completing the proof. We decided to keep the following somewhat longer proof as it is more self-contained and serves as a gentle introduction for some ideas which will be used in the subsequent section.

We begin with a prerequisite, well-known result (see \cite{gyarfas-survey}), the proof of which we include for completeness. 
Let us denote the four-vertex graph consisting of a path on three vertices together with a fourth vertex which is isolated by $F_4$. The following lemma is a special case of the Erd\H{o}s-Hajnal conjecture (\Cref{conj:E-H}) for $H=F_4$. 

\begin{lem}\label{lem:F4-free}
    Any $m^3$-vertex $F_4$-free graph $G$ contains a homogeneous set of size at least $m$.
\end{lem}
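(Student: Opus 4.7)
The plan is to do a case analysis based on whether some vertex has many non-neighbors or not.

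First I would observe the key structural consequence of being $F_4$-free: if $v$ is any vertex and $\overline{N}(v)$ is the set of its non-neighbors (excluding $v$ itself), then the induced subgraph on $\overline{N}(v)$ cannot contain an induced $P_3$. Indeed, any such $P_3$ together with $v$ would form an induced copy of $F_4 = P_3 + K_1$. Recall that $P_3$-free graphs are precisely disjoint unions of cliques, so $\overline{N}(v)$ decomposes as such.

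Now I would split into two cases depending on whether some vertex has $\geq m^2$ non-neighbors. Set $n = m^3$.

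\textbf{Case 1:} Some vertex $v$ satisfies $|\overline{N}(v)| \geq m^2$. Then $\overline{N}(v)$ is a disjoint union of cliques. Either one of these cliques has size at least $m$, in which case we have a clique of size $m$, or all cliques have size at most $m-1$, in which case the number of components is at least $m^2/(m-1) \geq m$, yielding an independent set of size $m$ by picking one vertex from each component.

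\textbf{Case 2:} Every vertex has fewer than $m^2$ non-neighbors. Then the complement $\overline{G}$ has maximum degree at most $m^2 - 1$, so by the greedy bound $\alpha(\overline{G}) \geq n/m^2 = m$, which gives a clique of size $m$ in $G$.

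There is no real obstacle here; the only thing to be careful about is choosing the threshold $m^2$ to make both cases balance out exactly at $n = m^3$. The argument is a clean two-case split using only the disjoint-union-of-cliques structure of $P_3$-free graphs plus the trivial greedy bound on the independence number in terms of the maximum degree, so I would expect the proof to fit in a short paragraph.
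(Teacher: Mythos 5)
Your proof is correct and rests on exactly the same two observations as the paper's: that $F_4$-freeness forces the non-neighbourhood of any vertex to be $P_3$-free (hence a disjoint union of cliques), and a Tur\'an/greedy bound on the complement. The paper packages this as a single contradiction via the product $\omega(G)\cdot\alpha(G) > m^2$ after picking the vertex of maximum non-degree, whereas you present it as a direct two-case split at the $m^2$ threshold; the two arguments are essentially the same proof in slightly different clothing, and yours is arguably a touch more transparent.
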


\begin{proof}
Suppose first towards a contradiction that $\alpha(G), \omega(G) \le m-1$.

Let $v$ be a vertex with the maximum number of non-neighbours in $G$, say $d$. Note that by Tur\'an's theorem applied to $\overline{G}$, (see e.g.\ \cite{alon-spencer}) we know that $\omega(G) = \alpha(\overline{G})\ge \frac{|\overline{G}|}{\Delta(\overline{G})+1} = \frac{m^3}{d+1}$. This implies that $d\ge \frac{m^3}{m-1}-1> m^2.$

Let $M$ be the set of non-neighbours of $v$ in $G$. Now observe that $H=G[M]$ does not have $P_3$ as a subgraph, or we obtain a copy of $F_4$.  This implies that for any $w \in V(H)$, we must have $N_H(w) \cup \{w\}$ being a clique. In particular, we can find a clique of size $\Delta(H)+1$. Once again, by Tur\'an's theorem, we know $\alpha(G) \ge \alpha(H) \ge \frac{|H|}{\Delta(H)+1}$.
Since $d=|H|$ and $\omega(H) \ge \Delta(H)+1$ we obtain $\omega(G)\cdot \alpha(G) \ge \omega(H)\cdot \frac{|H|}{\Delta(H)+1} \ge \omega(H) \cdot \frac{d}{\omega(H)} = d > m^2$, which is a contradiction. 
\end{proof}

A second prerequisite which we will be using repeatedly throughout the paper, is the following easy observation.
\begin{lem}\label{lem:cute-nice-obs}
Let $C$ be a connected component of a graph $G$ with $|C|\ge 2$ and $v\in V(G)\setminus C$. If $(\{v\}, C)$ is not a pure pair, then we can find vertices $u,w\in C$ such that $vw,uw\in E(G)$ and $vu\not\in E(G).$
\end{lem}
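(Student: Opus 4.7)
The plan is to prove this by exploiting the connectivity of $G[C]$ together with the fact that the pair $(\{v\},C)$ fails to be pure. Since $(\{v\},C)$ is not pure, it is neither complete nor anti-complete, so $v$ has at least one neighbor $a\in C$ and at least one non-neighbor $b\in C$ (here we implicitly read $C$ as a connected vertex subset in the sense that $G[C]$ is connected; the existence of both a neighbor and a non-neighbor is exactly the negation of ``pure pair'').

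Next I would use connectivity to interpolate between $a$ and $b$. Concretely, since $a,b\in C$ and $G[C]$ is connected, there is a path $a=x_1,x_2,\dots,x_k=b$ in $G[C]$. Walk along this path starting from $a$ and consider the \emph{last} index $i$ such that $x_i$ is a neighbor of $v$. Such an $i$ exists because $x_1=a$ is a neighbor of $v$, and by choice of $b=x_k$ we have $i<k$, so the vertex $x_{i+1}$ is defined and is a non-neighbor of $v$.

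Setting $w:=x_i$ and $u:=x_{i+1}$ then gives the desired configuration: $vw\in E(G)$ by the choice of $i$, $vu\notin E(G)$ by the maximality of $i$, and $uw\in E(G)$ because $x_i$ and $x_{i+1}$ are consecutive on the path inside $G[C]$.

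There is essentially no obstacle here; the only subtle point is choosing the witness on the path correctly (taking the last neighbor of $v$ rather than an arbitrary neighbor) so that the immediate successor on the path is automatically a non-neighbor of $v$ while still being adjacent to the chosen neighbor. Everything else is a direct unpacking of the definition of pure pair and of connectivity of $G[C]$.
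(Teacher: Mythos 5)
Your proof is correct and follows essentially the same approach as the paper's: both take a path in $G[C]$ between a neighbour and a non-neighbour of $v$ and locate a transition point along it. The only superficial difference is that the paper phrases the argument as a minimality contradiction while you argue directly via the last neighbour on the path; these are interchangeable.
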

\begin{proof}
Suppose there are no such vertices for a contradiction. Since $(\{v\}, C)$ is not a pure pair, we can find vertices $u'$ and $w'$ such that $vu' \not\in E(G)$ and $vw'\in E(G)$. Since $G[C]$ is connected, we can find a path consisting of vertices in $C$ between $u'$ and $w'$. Let $u$ and $w$ be the closest vertices on the path for which $vu \not\in E(G)$ and $vw\in E(G)$. Then there can be no vertex between them because this vertex, together with either $u$ or $w$, would contradict minimality. This shows that $u$ and $w$ must be adjacent, as desired. 
\end{proof}

Before turning to the main lemma of this section, we introduce some convenient notation.
Let $G$ be a graph and $e=xy\in E(\overline{G}) \cup E(G)$ a pair of vertices. 
$$N_{0|1}(e) := \{v\in V(G)\ |\ vx\in E(\overline{G}),\ vy\in E(G) \text{ or } vx\in E(G),\ vy\in E(\overline{G})\};$$
$$N_{00}(e) := \{v\in V(G)\ |\ vx\in E(\overline{G}),\ vy\in E(\overline{G})\}; \ \ \ N_{11}(e) := \{v\in V(G)\ |\ vx\in E(G),\ vy\in E(G)\}.$$

For two disjoint subsets $S,T\subseteq V(G)$, we denote the set of edges and non-edges between $S$ and $T$ by:
$$E(S,T) := \{uv\in E(G)\ |\ u\in S, \ v\in T\}; \quad \quad \quad\quad \overline{E}(S,T) := \{uv\notin E(G)\ |\ u\in S, \ v\in T\}.$$

We are now ready to state and prove the main result of this section.

\begin{thm}\label{lem:main}
Let $G$ be a $P_5$-free graph on $n$ vertices with $m:=\hom(G) < n$. Then, we can find an anti-complete pair $(A,B)$ with $|A|,|B| \geq \frac{n}{m^{13}}$ for which both ${G}[A]$ and ${G}[B]$ are connected. 
\end{thm}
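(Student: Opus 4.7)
My plan is to construct the anti-complete pair explicitly around a triple of vertices $(v_1, v_2, v_3)$ with $v_1 v_2 \in E(G)$ and $v_1 v_3, v_2 v_3 \notin E(G)$, mirroring the $H_5 = P_2 \sqcup P_3$ argument from the paragraph preceding the theorem but replacing the Alon--Pach--Solymosi count with a sequence of Tur\'an-type extractions so the proof remains self-contained. Writing $M_v := V(G) \setminus N_G[v]$ for the non-neighbourhood of a vertex $v$, I set
\[
X = N(v_2) \cap M_{v_1} \cap M_{v_3}, \qquad Y = N(v_3) \cap M_{v_1} \cap M_{v_2},
\]
and take $A = \{v_2\} \cup X$, $B = \{v_3\} \cup Y$. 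Each of $G[A]$ and $G[B]$ is a star (centred at $v_2$ and $v_3$ respectively), hence connected; by the memberships, $v_2$ is non-adjacent to every vertex of $B$ and $v_3$ to every vertex of $A$; and any edge $cy$ with $c \in X$, $y \in Y$ would complete the induced $P_5$ on $v_1,v_2,c,y,v_3$, the required non-edges $v_1c, v_1y, v_1v_3, v_2y, v_2v_3, cv_3$ being all forced by the definitions of $X, Y$ and the triple. This would contradict $P_5$-freeness, so $(A,B)$ is an anti-complete pair of connected sets and the task reduces to producing the triple with $|X|, |Y| \geq n/m^{13} - 1$.

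To find the triple, Tur\'an's theorem applied to $\overline G$ selects $v_3$ with $|M_{v_3}| \geq n/m - 1$. Applying Tur\'an again inside $\overline{G[M_{v_3}]}$ picks $v_1 \in M_{v_3}$ whose non-neighbourhood within $M_{v_3}$ (which is exactly $M_{v_1} \cap M_{v_3}$) has size $\geq |M_{v_3}|/m - 1$. The candidate $v_2$ must then be chosen from $N(v_1) \cap M_{v_3}$ so as to simultaneously have many neighbours inside $M_{v_1} \cap M_{v_3}$ (to make $|X|$ large) and many non-neighbours inside $N(v_3) \cap M_{v_1}$ (to make $|Y|$ large). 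I would achieve this by a two-step averaging: first restrict to a large subset of $N(v_1) \cap M_{v_3}$ on which $|Y|$ is uniformly large, then inside this subset choose $v_2$ maximising $|X|$. Degenerate cases where an intermediate set is small, or a relevant induced subgraph turns out to be $F_4$-free, are handled via \Cref{lem:F4-free}: it either produces a homogeneous set larger than $m$ (contradicting $\hom(G)=m$) or forces $n$ to be small enough that the bound $n/m^{13}$ is trivial.

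The principal obstacle is the simultaneous control of $|X|$ and $|Y|$: these depend on $v_2$ through different adjacency data ($|X|$ on $v_2$'s neighbours inside $M_{v_3}$, $|Y|$ on $v_2$'s non-neighbours outside it), and a single extraction only optimises one; hence the two-step refinement above is essential. Tracking the accumulated losses so the exponent lands at exactly $13$ (each averaging step contributing a factor of $m$, and a potential invocation of \Cref{lem:F4-free} contributing up to $m^3$) is the main quantitative challenge; everything else is a routine application of the tools already introduced, namely Tur\'an's theorem, \Cref{lem:F4-free}, and the ``not-a-pure-pair'' path extraction of \Cref{lem:cute-nice-obs}.
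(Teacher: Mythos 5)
Your structural core is sound: the five-tuple $v_1,v_2,c,y,v_3$ you describe is exactly the $P_3 \sqcup P_2$ configuration the paper mentions in the remark preceding the theorem, the $P_5$-check (all six required non-edges are forced by the set memberships) is correct, and the star-centred sets $A=\{v_2\}\cup X$, $B=\{v_3\}\cup Y$ are indeed connected and anticomplete. The difficulty is entirely in producing the triple with $|X|,|Y|$ both large, and this is where the proposal has a genuine gap.

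The two Tur\'an extractions control $|M_{v_3}|$ and $|M_{v_1}\cap M_{v_3}|$, but neither controls the two quantities you actually need a handle on. First, $N(v_1)\cap M_{v_3}$ (the candidate set for $v_2$) can be empty: Tur\'an picks a $v_1$ with many non-neighbours in $M_{v_3}$, but that $v_1$ may be an isolated vertex of $G[M_{v_3}]$. Second, and more seriously, $Z:=N(v_3)\cap M_{v_1}$, the ambient set for $Y$, is never lower-bounded; nothing in the construction prevents $M_{v_1}\subseteq M_{v_3}\cup\{v_3\}$, in which case $Z=\emptyset$ and $|Y|=0$ for every choice of $v_2$. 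Even if both sets are large, the ``two-step averaging'' is not a real argument: to find a subset of $N(v_1)\cap M_{v_3}$ on which $|Y|$ is uniformly large you would need an upper bound on the edge density between $N(v_1)\cap M_{v_3}$ and $Z$, and no such bound is available --- a $v_2$-$z$ edge only creates an induced $P_4$, not a $P_5$, so $P_5$-freeness gives no direct constraint at this stage. The appeal to \Cref{lem:F4-free} as an escape hatch does not obviously repair any of these holes, since it produces a homogeneous set, not an edge-density bound in a specific bipartite graph.

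This is precisely the difficulty the paper sidesteps. Rather than greedily fixing three vertices, the paper's proof \emph{counts}: it first shows every $(m+1)^3$ vertices contain an $F_4$, supersaturates to get $\gtrsim n^4/m^{12}$ copies of $F_4$, double-counts to find a single edge $e$ contained in $\gtrsim n^2/m^{12}$ copies, and observes that each such copy corresponds to a \emph{non-edge} between $N_{00}(e)$ and $N_{0|1}(e)$. The lower bound on the number of such non-edges is what simultaneously forces both candidate sets to be large; this is the step your greedy extraction does not reproduce. (The analogous mechanism in the APS-based remark is that $|X|\cdot|Y|$ is bounded below by the number of $H_5$-extensions, which combined with $|X|,|Y|\le n$ forces $\min(|X|,|Y|)\ge n/m^{O(1)}$.) The paper's \Cref{cl:consistency10} then turns the many non-edges into a single large connected component of $N_{00}$ anticomplete to a large chunk of $N_{0|1}$, a cleaner structural conclusion than your star construction. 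So the two approaches share the same five-vertex target, but the paper's counting-and-averaging over $F_4$ copies is essential and cannot be replaced by the greedy Tur\'an scheme as written.
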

\begin{proof}
Note that since $\hom(G)<n$, we know $G$ is not a clique, so we can find a non-edge. This non-edge provides us with the desired pair unless $n >m^{13} \ge 2^{13}$. On the other hand, a quantitative version of Ramsey's theorem due to Erd\H{o}s and Szekeres \cite{E-S} implies $m=\hom(G) \ge \frac 12 \log n >\frac{13}{2}\log m$ which implies $m \ge 26$. 

By applying \Cref{lem:F4-free} to any subgraph of $G$ on at least $s:=(m+1)^{3}$ vertices, since there is no homogeneous set of size larger than $m$, we conclude this subgraph contains a copy of $F_4$.

\begin{claim} \label{cl:supersaturation}
There are more than $\frac{n^4}{s^4}$ copies of $F_4$ in $G$.
\end{claim}
\begin{cla_proof}
Any vertex subset of size $s$ induces an $F_4$ and there are $\binom{n}{s}$ many of these subsets. Each $F_4$ is counted at most $\binom{n-4}{s-4}$ times so that there are at least $\binom{n}{s}/\binom{n-4}{s-4}$ many distinct copies of $F_4$ in $G$. We have 
$$\frac{\binom{n}{s}}{\binom{n-4}{s-4}}= \frac{n(n-1)(n-2)(n-3)}{s(s-1)(s-2)(s-3)} > \frac{n^4}{s^4},$$
completing the proof.
\end{cla_proof}

Let $M$ be the maximum number of distinct induced $F_4$ subgraphs that share the same edge. Since each copy of $F_4$ contains two edges, we get by double counting and using the previous claim that
$ 2\cdot \frac{n^4}{s^4} \le M \cdot \binom{n}{2}.$
After rearranging, this gives
\begin{equation}\label{cl:btm-max}
M > \frac{4n^2}{s^4}.
\end{equation}

Let us fix an edge $e$ which belongs to $M$ different $F_4$'s. Observe that given any $F_4$ containing $e$, the remaining two vertices need to be one in $N_{00}(e)$ and one in $N_{0|1}(e) $, so the total number of $F_4$'s containing $e$ is precisely the number of non-edges in between these two sets. Rewriting this in terms of an equation, we get
\begin{equation}\label{eq:density_00_01}
|\overline{E}(N_{0|1}(e),N_{00}(e))| = M
\end{equation}

Henceforth, we abbreviate $N_{ij}:=N_{ij}(e)$ since we only consider such neighbourhoods with respect to the edge $e$.

\begin{claim}\label{cl:consistency10}
    Let $C$ be a connected component of ${G}[N_{00}]$ and $v\in N_{0|1}$. Then either $v$ is adjacent to all vertices in $C$ or $v$ is non-adjacent to all vertices in $C$.
\end{claim}
\begin{cla_proof}
    Suppose to the contrary that $v$ has both a neighbour $u$ and a non-neighbour $w$ among vertices of $C$. Then by \Cref{lem:cute-nice-obs} there exist adjacent vertices $u',w' \in C$ for which $vu' \in E(G)$ and $vw' \notin E(G)$. If we denote vertices of $e$ as $xy$ such that $vx \in E(G)$ and $vy \notin E(G)$, then $w'u'vxy$ gives an induced copy of $P_5$, a contradiction. See \Cref{fig:consistency} for an illustration.
\end{cla_proof}
\begin{figure}
    \centering
    \begin{tikzpicture}[scale=1.1, rotate=-90]
\defPt{0.2}{-0.5}{x0}
\defPt{1}{1.2}{x1}
\defPt{1.25}{-1.2}{x2}
\defPt{2.5}{0.6}{x3}
\defPt{2.5}{-0.6}{x4}

\draw[line width = 1.5 pt] (x3) -- (x4) -- (x1) -- (x2) -- (x0);

\draw[line width = 1 pt, dashed] (x3) -- (x0);
\draw[line width = 1 pt, dashed] (x3) -- (x1);
\draw[line width = 1 pt, dashed] (x3) -- (x2);
\draw[line width = 1 pt, dashed] (x4) -- (x2);
\draw[line width = 1 pt, dashed] (x4) -- (x0);
\draw[line width = 1 pt, dashed] (x1) -- (x0);

\fitellipsis{$(x0)$}{$(x2)$}{0.4};
\fitellipsis{$(x1)+(0.25,0)$}{$(x1)-(0.8,0.7)$}{0.4};

\node[] at ($(x0)+(-0.3,0)$) {$w'$};
\node[] at ($(x1)+(0.3,0.05)$) {$v$};
\node[] at ($(x2)+(0.1,-0.25)$) {$u'$};
\node[] at ($(x3)+(0,0.35)$) {$y$};
\node[] at ($(x4)+(0,-0.35)$) {$x$};
\node[] at ($0.5*(x2)+0.5*(x0)+(-0.4,-0.9)$) {$N_{00}$};
\node[] at ($(x1)+(-0.6,0.5)$) {$N_{0\mid 1}$};

\foreach \i in {0,...,4}
{
\draw[] (x\i) \smvx;
}
\end{tikzpicture}
    \caption{Illustration of a $P_5$ leading to the contradiction in \Cref{cl:consistency10}.}
    \label{fig:consistency}
\end{figure}
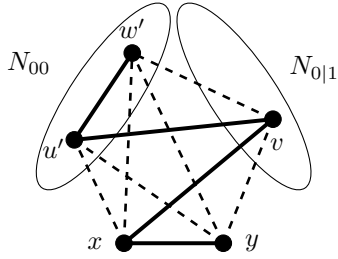

Let $C_1,\ldots ,C_k$ be the connected components of ${G}[N_{00}]$. The above claim guarantees that any $v\in N_{0|1}$ is complete or anti-complete to $C_i$. Since there are no edges between different connected components, we can obtain an independent set by taking one vertex from each. This, in particular, implies that $k \le m$.

We call a $C_i$ \textit{small} if 
$$|C_i| \leq \frac{2n}{s^4m}.$$
Note that this implies that 
$$\sum_{i: \: C_i\text{ is small}} |\overline{E}(C_i,N_{0|1})| \leq \sum_{i: \: C_i\text{ is small}} |C_i|n \leq k \cdot \frac{2n}{s^4m} \cdot n \le  \frac{2n^2}{s^4} < \frac{M}{2}.$$
where in the penultimate inequality, we used $k \le m$.

We say a component $C_i$ has \textit{low degree} if
$$|\overline{E}(C_i,N_{0|1})| \leq \frac{M|C_i|}{2n}.$$ 
Note that this implies 
$$\sum_{i: \: C_i\text{ has low degree}} |\overline{E}(C_i,N_{0|1})| \leq \sum_{i: \: C_i\text{ has low degree}} \frac{M|C_i|}{2n} \le \frac{M}{2n} \cdot \sum_{i=1}^k|C_j|\leq \frac{M}{2}.$$

By \eqref{eq:density_00_01} and since $C_1,\ldots, C_k$ partition the vertices of $N_{00}$, we have
$$ \sum_{i=1}^k |\overline{E}(C_i,N_{0|1})|=|\overline{E}(N_{00},N_{0|1})| = M. $$

Since the contribution of small and low-degree components is less than $M$, we conclude there is some $C_i$ which is neither small nor has low degree. That is,
$$|C_i| > \frac{2n}{s^4m} = \frac{2n}{m(m+1)^{12}}\ge \frac{n}{m^{13}}$$
using that $2 \ge (1+1/m)^{12}$ which holds since $m \ge {26}$ and
$$\frac{|\overline{E}(C_i,N_{0|1})|}{|C_i|} > \frac{M}{2n} \ge \frac{2n}{s^4} = \frac{n}{m^{12}}.$$ 
The second inequality guarantees that there are at least $\frac{n}{m^{12}}$ vertices in $N_{0|1}$ which have a non-neighbour in $C_i$ and, therefore by \Cref{cl:consistency10}, are anti-complete to $C_i$. Let the collection of these vertices be $B'$. There are at most $m$ connected components in ${G}[B']$, so there is some component $B\subseteq B'$ that has $|B| \geq \frac{n}{m^{13}}$. By construction, we may let $A:=C_i$ to find the desired pair of subsets. 
\end{proof} 

\section{Finding a large homogeneous set in $P_5$-free graphs}
In this section, we prove our main result \Cref{thm:main}. We begin with a few general preliminaries.

\subsection{Preliminaries}
It will be convenient for us to work with the following equivalent formulation of the Erd\H{o}s-Hajnal conjecture in terms of cographs. A graph is said to be a \emph{cograph} if it is a single-vertex graph or if it can be obtained by taking a vertex-disjoint union of two smaller cographs in such a way that their vertex sets make a pure pair. Since cographs are known to be perfect \cite{seinsche}, the following conjecture is equivalent to the Erd\H{o}s-Hajnal conjecture.

\begin{conj}\label{conj:E-H-2}
Given any graph $H$, there exists an $\eps>0$ such that any $n$-vertex $H$-free graph $G$ contains a cograph of order $n^{\eps}$ as an induced subgraph.
\end{conj}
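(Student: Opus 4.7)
The plan is an inductive pure-pair construction that exploits the recursive definition of cographs. A cograph of order $N$ can be assembled in $\lceil \log_2 N \rceil$ rounds, where each round joins two smaller cographs across a pure pair of their vertex sets. So to produce an induced cograph of order $n^{\eps}$ inside an $H$-free graph $G$, I would try to exhibit such a nested family of pure pairs within $V(G)$, building the cograph bottom-up as I descend the recursion.

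Concretely, I would aim to prove by induction on $n$ the statement: for some $\eps=\eps(H)>0$ and a constant $c=c(H)\in(0,1)$, every $H$-free graph on $n$ vertices contains an induced cograph of order at least $n^{\eps}$. The base case is trivial. For the inductive step, the key sub-lemma I would want is: every $H$-free graph on $n$ vertices either contains a homogeneous set of size $n^{\eps}$ (which itself is a cograph), or contains a pure pair $(A,B)$ with $|A|,|B|\ge cn$. Given such a pair, apply induction to $G[A]$ and $G[B]$ to obtain cographs $C_A,C_B$ of orders at least $|A|^{\eps}$ and $|B|^{\eps}$; since $A\cup B$ is a pure pair, $V(C_A)\cup V(C_B)$ induces a cograph of order at least $|A|^{\eps}+|B|^{\eps}\ge 2(cn)^{\eps}$, which is at least $n^{\eps}$ provided $\eps$ is chosen small enough that $2c^{\eps}\ge 1$, closing the induction.

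The main obstacle is the pure-pair sub-lemma, and this is precisely where the full difficulty of the Erd\H{o}s-Hajnal conjecture concentrates. The requirement that both parts of the pure pair have size $\Omega(n)$, with a constant depending only on $H$ and not on $\hom(G)$, is well beyond the current state of the art. The pure pair produced by \Cref{thm:main-pair} in the $P_5$-free case has both sides of size only $n/m^{13}$ with $m=\hom(G)$; iterating this weaker guarantee with careful bookkeeping of the degradation per round should recover the subpolynomial bound $2^{\Omega((\log n)^{2/3})}$ of \Cref{thm:main}, but closing the gap between $n/m^{O(1)}$ and $\Omega(n)$ is, by the equivalence stated just before the conjecture, nothing less than a full proof of Erd\H{o}s-Hajnal for $H$. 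A plausible attack on \Cref{conj:E-H-2} therefore requires a genuinely new structural mechanism producing linear-sized pure pairs in $H$-free graphs, or a novel amortisation argument that absorbs the per-round loss across all $\Theta(\log n)$ levels of the recursion while still producing a cograph of polynomial order.
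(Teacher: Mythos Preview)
The statement you were asked to prove is \Cref{conj:E-H-2}, and the paper does \emph{not} prove it: it is stated explicitly as a conjecture, introduced as an equivalent reformulation of the Erd\H{o}s--Hajnal conjecture (the equivalence following from the fact that cographs are perfect). So there is no ``paper's own proof'' to compare against, and your proposal is correct in recognising that the statement is open.

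Your discussion is a reasonable account of why the natural pure-pair recursion falls short. The paper's only claim surrounding \Cref{conj:E-H-2} is the one-line remark that perfection of cographs makes it equivalent to \Cref{conj:E-H}; it does not attempt the kind of linear-sized pure-pair lemma you identify as the obstacle. Your observation that iterating \Cref{thm:main-pair} with parts of size $n/m^{O(1)}$ only recovers subpolynomial bounds, and that closing the gap to $\Omega(n)$-sized parts is tantamount to the full conjecture, is exactly the point the paper makes in the introduction when discussing the $2^{\Omega(\sqrt{\log n})}$ and $2^{\Omega(\sqrt{\log n \log\log n})}$ barriers. In short: there is no gap in your reasoning, because you correctly diagnose the statement as an open problem rather than offering a purported proof.
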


The following few technical lemmas concerning concave functions will be helpful in controlling the sizes of certain restricted subsets in our argument. Let us note that we will always apply them to the function $f(x)=2^{c(\log x)^{2/3}}$ but are writing them in a somewhat more general form in the hope that they may aid in proving results with stronger functions. 
\begin{lem}\label{lem:concave}
Let $f:[1,\infty) \to \mathbb{R}$ be a continuous function, twice differentiable on $(1,\infty)$ with $f''(t) \leq 0$ for all $t > 1$. If $y \ge x \ge a+1\ge 1$, then $$f(x)+f(y) \geq f(x-a) + f(y+a).$$
\end{lem}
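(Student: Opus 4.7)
The plan is to exploit the standard fact that for concave functions, finite differences over a fixed window are non-increasing. Concretely, I would introduce the auxiliary function $g(t) := f(t+a) - f(t)$ defined for $t \geq 1$, and rewrite the desired inequality $f(x) + f(y) \geq f(x-a) + f(y+a)$ in the equivalent form $g(x-a) \geq g(y)$. The hypothesis $y \geq x$ together with $a \geq 0$ gives $x-a \leq y$, so it suffices to show that $g$ is non-increasing on $[1,\infty)$.

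To verify this, I would differentiate: for $t > 1$ we have $g'(t) = f'(t+a) - f'(t)$, and since $f''(s) \leq 0$ for all $s > 1$, the derivative $f'$ is non-increasing on $(1,\infty)$, so $g'(t) \leq 0$. Combined with the continuity of $f$ (hence of $g$) on $[1,\infty)$, this gives that $g$ is non-increasing on $[1,\infty)$. Note that the condition $x \geq a+1$ is exactly what is needed to ensure $x - a \geq 1$ lies in the domain of $g$, so the evaluation $g(x-a)$ is well-defined.

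Applying monotonicity of $g$ at the two points $x - a \leq y$ (both at least $1$) then yields $g(x-a) \geq g(y)$, i.e.
\[
f(x) - f(x-a) \;\geq\; f(y+a) - f(y),
\]
and rearranging gives the claim.

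There is no real obstacle here; the only care needed is to check the domain condition, which is built into the hypothesis $x \geq a+1$, and to justify the monotonicity of $g$ via $f''\leq 0$ rather than by a direct chord-slope argument. One could equally well write $g(t) = \int_{t}^{t+a} f'(s)\,ds$ and observe that shifting the interval to the right can only decrease the integrand pointwise, but the derivative-based argument above is cleanest.
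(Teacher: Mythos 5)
Your proof is correct and takes essentially the same approach as the paper: both reduce to the fact that $f'' \le 0$ makes $f'$ non-increasing, and then compare the increment of $f$ over $[x-a,x]$ with that over $[y,y+a]$. The paper phrases this as an integral inequality via the Fundamental Theorem of Calculus, while you encapsulate the same comparison in the auxiliary function $g(t)=f(t+a)-f(t)$ and its monotonicity; these are cosmetically different presentations of the identical idea.
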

\begin{proof}
We begin by noting that the second derivative $f''(t) \leq 0$ for $t> 1$ implies the first derivative $f'(t)$ is non-increasing on the interval $[1,\infty)$; so, $f'(x-a+s) \geq f'(y+s)$ for every $s\in [0,a]$ because $x-a+s < y+s$. Then,
$$\int_{x-a}^x f'(t) \,dt\geq \int_y^{y+a} f'(t) \,dt. $$
By the Fundamental Theorem of Calculus, this gives
$$f(x) - f(x-a) \geq f(y+a)-f(y)$$
which gives the desired inequality after rearranging.
\end{proof} 

The next lemma is also analytic, although it concerns real sequences $f:\mathbb{N} \to \mathbb{R}^+$. Such a sequence is said to be concave if $f(i)-f(i-1)\ge f(i+1)-f(i)$ for all integers $i \ge 1$. An easy summation shows that such sequences satisfy the conclusion of the above easy lemma as well, for any integers $y \ge x \ge a.$
\begin{lem}\label{r-partite-bound}
Let $f:\mathbb{N} \to \mathbb{R}^+$ be concave and increasing function 
and $G$ a complete or anticomplete $r$-partite $s$-vertex graph with each part of size at most $a \in \mathbb{N}$. Assuming that inside of any vertex subset of size $s' \le s$, we can find a cograph of order at least $f(s')$, then in $G$ we can find a cograph of order $\floor{s/a}\cdot f(a)$.
\end{lem}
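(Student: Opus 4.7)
The plan is to exploit the $r$-partite structure of $G$. Let $V_1, \ldots, V_r$ be the parts of $G$, with sizes $a_i := |V_i| \le a$ and $\sum_{i=1}^r a_i = s$. Applying the hypothesis inside each $V_i$ (which is a vertex subset of size $a_i \le s$), I get an induced cograph $H_i \subseteq V_i$ of order at least $f(a_i)$. Since $G$ is complete (or anti-complete) $r$-partite, every pair of distinct parts $V_i, V_j$ is a pure pair in $G$, so the same holds for every pair $\bigl(V(H_i), V(H_j)\bigr)$. The induced subgraph $H := G[V(H_1) \cup \cdots \cup V(H_r)]$ is therefore a cograph: $H_1\cup H_2$ is a vertex-disjoint union of two cographs spanning a pure pair and so is a cograph, and by an easy induction on $j$, the union $H_1\cup\cdots\cup H_j$ is a cograph for every $j$, because the pair between $V(H_1)\cup\cdots\cup V(H_{j-1})$ and $V(H_j)$ remains pure (all edges present in the complete case, none in the anti-complete case).

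Once this is set up, the order of $H$ is at least $\sum_{i=1}^r f(a_i)$, so it only remains to prove $\sum_{i=1}^r f(a_i) \ge \floor{s/a}\, f(a)$. I would do this with one Jensen-type step. Since $f\colon \mathbb{N} \to \mathbb{R}^+$ is concave and $f(0) \ge 0$ (recall that in this paper $\mathbb{N}$ contains $0$), linear interpolation between $(0, f(0))$ and $(a, f(a))$ gives
\[
f(a_i) \;\ge\; \tfrac{a_i}{a}\, f(a) + \tfrac{a - a_i}{a}\, f(0) \;\ge\; \tfrac{a_i}{a}\, f(a)
\]
for every integer $a_i \in \{0, 1, \ldots, a\}$. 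Summing over $i = 1, \ldots, r$,
\[
\sum_{i=1}^r f(a_i) \;\ge\; \frac{f(a)}{a} \sum_{i=1}^r a_i \;=\; \frac{s}{a}\, f(a) \;\ge\; \floor{s/a}\, f(a),
\]
as required.

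The proof is essentially immediate, and the only step demanding real (but minor) care is the Jensen inequality $f(a_i) \ge (a_i/a)\, f(a)$, which must be justified from \emph{discrete} concavity together with $f(0) \ge 0$. This can be done by extending $f$ piecewise linearly to a concave function on $[0, a]$ (so that the standard continuous Jensen inequality applies and the extension agrees with $f$ at integers), or equivalently by iterating the integer form of \Cref{lem:concave} to shift mass from $a_i$ down to $0$ while keeping the inequality. Everything else---the cograph closure across the $r$ parts and the final simplification $s/a \ge \floor{s/a}$---is routine.
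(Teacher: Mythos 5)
Your proposal is correct and follows essentially the same approach as the paper: find a cograph of order $f(a_i)$ inside each part, observe that the union across the $r$-partite structure is again a cograph, and then use concavity to bound $\sum_i f(a_i) \ge \floor{s/a} f(a)$. The paper carries out the last step by repeatedly shifting mass between parts via discrete concavity (filling parts up to size $a$), while you use the equivalent chord inequality $f(a_i) \ge (a_i/a)f(a)$ from $f(0)\ge 0$ — a cosmetic difference you yourself note.
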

\begin{proof}
Let $1\le s_1,\ldots, s_r \le a$ denote the sizes of the parts so that $s=s_1+\ldots+s_r$. By applying the assumption to each part $i$, we can find a cograph of size $f(s_i)$ inside of part $i$. Now, since the graph is complete or anticomplete $r$-partite, the union of these cographs is also a cograph of order at least 
$$f(s_1)+\ldots+f(s_r) \ge \underbrace{f(a)+\ldots+f(a)}_{\floor{s/a}}+f\left(s-\floor{s/a}\cdot a\right)\ge\floor{s/a}\cdot f(a),$$
where the first inequality follows by repeated application of the definition of concavity.
\end{proof}

In the previous lemma, we know that all parts are complete or anticomplete to each other. The following lemma is similar to the previous one, but we only know that any two parts make a pure pair.

\begin{lem}\label{r-partite-incons-bound}
Let $f:[1,\infty) \to \mathbb{R}^+$ be a function such that $\log (f(2^x))$ is concave and increasing. Let $G$ be an $s$-vertex graph with vertices partitioned into disjoint parts $V_1,\ldots, V_r$, each of size in $[a,b]$, and such that for each $i<j$, the pair $(V_i,V_j)$ is pure. Assuming that inside of any vertex subset of size $s' \le s$ we can find a cograph of order $f(s')$, then in $G$ we can find a cograph of order at least $\min\left\{f\left(\frac{s}{2a \log (4b/a)}\right)\cdot f(a), f\left( \frac{s}{2b \log (4b/a)}\right)\cdot f(b)\right\}$.
\end{lem}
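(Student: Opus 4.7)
The plan is to mimic the proof of the previous lemma but first reduce, via dyadic bucketing, to the case where all parts have sizes within a factor of two of each other, and then use concavity of $\log f(2^x)$ to interpolate the resulting bound between the endpoints $a$ and $b$.

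First, I would dyadically bucket the parts: let $B_j$ consist of those $V_i$ with $|V_i|\in[a\cdot 2^j,\, a\cdot 2^{j+1})$. Since the sizes lie in $[a,b]$, at most $\log(4b/a)$ buckets are non-empty, so by pigeonhole some bucket $B_j$ contains parts of total size at least $s/\log(4b/a)$. Writing $a_j:=a\cdot 2^j$ and letting $k_j$ be the number of parts in $B_j$, each of size strictly less than $2a_j$, we get $k_j \ge \frac{s}{2 a_j \log(4b/a)}$.

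Next I would carry out the standard ``pick one representative and then substitute back'' trick. Pick one vertex from each part in $B_j$ to form a set $R$ of $k_j$ vertices; since every pair of parts is pure, the induced graph $G[R]$ has an edge between two representatives precisely when the corresponding pair of parts is complete. Applying the hypothesis to $R$ yields a cograph $C^*$ of order at least $f(k_j)$ whose vertices correspond to some subcollection $I$ of parts. Apply the hypothesis again to each $V_i$ with $i\in I$ to obtain a cograph $H_i\subseteq V_i$ of order at least $f(|V_i|)\ge f(a_j)$ (noting that $f$ is increasing, which follows from $\log f(2^x)$ being increasing). Replacing each vertex of $C^*$ by the corresponding $H_i$ produces, by closure of the cograph class under substitution together with the matching of pure pairs to the edges/non-edges of $C^*$, a cograph of order at least $f(k_j)\cdot f(a_j)$.

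Finally, it remains to check that $f(k_j)\cdot f(a_j)$ is at least the claimed minimum for every $a_j\in[a,b]$. Set $F(x):=\log f(2^x)$, which is concave and increasing by hypothesis, let $L:=\log(4b/a)$, and consider
\[
h(y) \,:=\, F\!\left(\log\tfrac{s}{2L}-y\right)+F(y),\qquad y\in[\log a,\log b].
\]
As a sum of two concave functions of $y$, $h$ is concave and so minimised at an endpoint of $[\log a,\log b]$; since $\log\bigl(f(k_j)\,f(a_j)\bigr)\ge h(\log a_j)$ by the lower bound on $k_j$ together with $f$ increasing, and since $h(\log a)$ and $h(\log b)$ are precisely the logarithms of the two quantities inside the claimed minimum, exponentiating yields the desired bound. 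The only conceptually delicate point is the substitution step: one needs that plugging cographs into the vertices of a cograph template along a pure-pair partition still yields a cograph, which is where having honest (rather than almost-) pure pairs is crucial. Everything else --- the dyadic bucketing and the concavity interpolation to the two endpoint cases --- is routine once the hypothesis on $\log f(2^x)$ is in place; indeed, the hypothesis is phrased exactly so that this interpolation step goes through.
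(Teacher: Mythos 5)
Your proposal is correct and follows essentially the same approach as the paper: dyadically bucket the parts, pick the heaviest bucket, use a representative-then-substitute argument to get a cograph of size $f(k_j)\cdot f(a_j)$, and then invoke concavity of $\log f(2^x)$ to interpolate the bound to the two endpoints $a$ and $b$. The only cosmetic differences are that the paper anchors the per-part lower bound at $t:=\min_{j\in B_i}|V_j|$ rather than at the left endpoint $a_j$ of the bucket interval, and phrases the concavity step as a constrained optimisation of $g(x')+g(y')$ with $x'+y'$ fixed rather than as concavity in $y$ of your function $h$; these are interchangeable.
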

\begin{proof}
Let us denote for all $\floor{\log a}+1\le i \le \ceil{\log b}$ by $B_i$ the \emph{bucket} $i$ consisting of all $j$ for which $|V_j| \in [2^{i-1},2^i]$. Since there are at most $\ceil{\log b}-\floor{\log a}\le  \log (4{b}/{a})$ buckets, we know that for one of them, say $B_i$, we have $\sum_{j \in B_i} |V_j| \ge \frac{s}{\log (4b/a)}$. Let $t=\min_{j \in B_i} \{|V_j|\}$ so that $a\le t \le b$. Since for each $j \in B_i$ we have $2^{i-1}\le |V_j|\le 2^i$, we know $t\le |V_j|\le 2t$ which implies $|B_i| \ge \frac{s}{2t \log (4b/a)}$. If we apply the assumption to a subgraph of $G$ consisting of precisely one representative vertex from each $V_j$ for $j \in B_i$, we can find inside of it a cograph of size at least $f(|B_i|)$. If we now apply the assumption to each $V_j$ containing a representative vertex of this cograph, then we obtain inside of each $V_j$ a cograph of size at least $f(t).$ The union of these cographs is also a cograph of order at least 
$$f(|B_i|)\cdot f(t) \ge f\left( \frac{s}{2t \log (4b/a)}\right)\cdot f(t).$$
Now we claim that $f(x)f(y)$ is minimised given $xy$ is fixed over all $y \in [a,b]$ at the boundary; that is when $y=a$ or $y=b$. To see this consider the function $g(x)=\log f(2^x)$ and note that our minimisation problem is equivalent to minimising $\log (f(x)f(y))=\log f(x) + \log f(y)$ subject to $xy$ being constant. Substituting $x'=\log x$ and $y'= \log y$ we wish to optimise $g(x')+g(y')$ subject to $2^{x'}2^{y'}$ being fixed. Since this is equivalent to $x'+y'$ being fixed and we assumed $g$ is concave, the claim follows.
\end{proof}

The following simple lemma will also be convenient and used repeatedly in the proof.

\begin{lem}\label{lem:one-third-pure-pair} 
Suppose in an $n$-vertex graph $G$ the size of a maximum connected component is at most $n/3.$ Then there is an anti-complete pair with both sides of size at least $n/3$.
\end{lem}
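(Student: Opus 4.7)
The statement is a simple combinatorial fact, so I plan a short and direct greedy argument. Let $C_1,\ldots,C_k$ be the connected components of $G$, listed in any order, with sizes $c_i=|C_i|$ satisfying $c_1+\cdots+c_k = n$ and, by hypothesis, $\max_i c_i \le n/3$. The desired anti-complete pair will be obtained simply by splitting the components into two unions.

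The plan is to build a set $A$ by adding whole components one at a time. Formally, let $j$ be the smallest index such that $|C_1\cup\cdots\cup C_j|\ge n/3$, and set $A:=C_1\cup\cdots\cup C_j$ and $B:=C_{j+1}\cup\cdots\cup C_k$. Such a $j$ exists because the total sum is $n\ge n/3$. By minimality of $j$, before adding $C_j$ the partial union had size strictly less than $n/3$, so
\[
|A| < \frac{n}{3} + c_j \le \frac{n}{3}+\frac{n}{3} = \frac{2n}{3},
\]
using $c_j\le n/3$. Therefore $|A|\in[n/3,2n/3)$, which forces $|B|=n-|A|\ge n/3$ as well.

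Finally, since $A$ and $B$ are disjoint unions of distinct connected components of $G$, no edge can run between them; that is, $(A,B)$ is an anti-complete pair. Both sides have size at least $n/3$, as required.

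The only thing to be careful about is the trivial edge case where the greedy first step already overshoots, but this is handled by the bound $c_j\le n/3$ above, which prevents $|A|$ from reaching $2n/3$. I do not foresee any real obstacle in this proof.
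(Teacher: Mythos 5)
Your proof is correct and takes essentially the same approach as the paper: greedily accumulate connected components until the running total first reaches $n/3$, then use the hypothesis $|C_j|\le n/3$ to bound the accumulated part strictly below $2n/3$ and conclude the remainder has size at least $n/3$.
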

\begin{proof}
Let $C_1,\ldots, C_t$ be connected components of $G$ and let $i$ be minimal such that $|C_1|+\ldots + |C_{i}| \ge n/3$. Since minimality implies $|C_1|+\ldots + |C_{i-1}| < n/3$ and since by assumption $|C_i| \le n/3,$ we have $|C_{i+1}|+\ldots +|C_t|\ge n/3$. Since there are no edges between distinct components of a graph, we get that $(C_1 \cup \ldots \cup C_i, C_{i+1} \cup \ldots \cup C_t)$ is an anti-complete pair with the desired properties.
\end{proof}

\subsection{Proof of our main result}

In the proof of the following theorem, we will occasionally switch between the graph theoretic and Ramsey theoretic perspectives. In particular, we will refer to edges of a graph as red edges and non-edges as blue edges. Among other things, this will be helpful when illustrating various parts of our arguments. Given two disjoint subsets of vertices $S$ and $T$, we will say $S$ is \textit{red/blue} to $T$ if all edges between $S$ and $T$ are red/blue. So a red pair is simply a complete pure pair, and a blue pair is an anti-complete pure pair.
In case we know that for every vertex in $S$, all its edges towards $T$ are monochromatic, then we say $S$ is \emph{consistent} to $T$.

We restate our main result in the cograph setting and in a more quantitative form.

\begin{thm}\label{thm:main2}
Any $P_5$-free $n$-vertex graph contains a cograph of order at least $2^{(\log n)^{2/3}/16}.$ 
\end{thm}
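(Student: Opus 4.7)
I will prove the statement by strong induction on $n$, using $f(n) = 2^{(\log n)^{2/3}/16}$ as the target cograph size. The base case of small $n$ is handled by Ramsey's theorem: any homogeneous set is a cograph and $\hom(G) \ge \tfrac{1}{2}\log n$ already exceeds $f(n)$ for small $n$. For the inductive step, let $m := \hom(G)$. If $m \ge f(n)$, then the homogeneous set itself is a cograph of the desired size and we are done, so we may assume $\log m < (\log n)^{2/3}/16$.

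The natural strategy is to iterate the pure pair lemma, Theorem \ref{lem:main}. One application yields an anti-complete pair $(A, B)$ with $|A|, |B| \ge n/m^{13}$; iterating inside $A$, $B$, and their subsequent subparts for $d$ levels produces $2^d$ pairwise anti-complete subsets $V_1, \ldots, V_{2^d}$ of $V(G)$, each of size at least $n/m^{13d}$. By the inductive hypothesis, each $V_i$ contains an induced cograph of size at least $f(n/m^{13d})$, and since the parts are pairwise anti-complete, Lemma \ref{r-partite-bound} combines them into an induced cograph of $G$ of size at least $2^d \cdot f(n/m^{13d})$. The remaining task is to choose $d$ so that this quantity exceeds $f(n)$, which by the concavity estimates captured in Lemma \ref{lem:concave} reduces to the inequality $d + \tfrac{1}{16}(\log n - 13 d \log m)^{2/3} \ge \tfrac{1}{16}(\log n)^{2/3}$. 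A Taylor expansion of $x^{2/3}$ shows this holds comfortably whenever $\log m = O((\log n)^{1/3})$.

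The main obstacle I anticipate is the intermediate regime $(\log n)^{1/3} \ll \log m < (\log n)^{2/3}/16$, where the naive binary iteration is too lossy: each recursion level shrinks part sizes by a factor $m^{13}$ but only doubles the number of parts. To bridge this regime, I expect the proof needs to extract many pairwise anti-complete pieces of comparable size in a single step rather than by binary subdivision---for example by exploiting the several mutually anti-complete components found simultaneously inside $N_{00}(e)$ during the proof of the pure pair lemma, or by iterating the pure pair lemma in an ``unbalanced'' fashion against a single large remainder so as to produce on the order of $2^{\Omega(\log m / (\log n)^{1/3})}$ anti-complete parts, each of size $\Omega(n/m^{13})$. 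The auxiliary tools in the preliminaries---Lemma \ref{r-partite-bound} (for combining complete or anti-complete parts), Lemma \ref{r-partite-incons-bound} (for combining parts of mixed pure types), Lemma \ref{lem:one-third-pure-pair} (yielding a large anti-complete pair whenever no component dominates), and the connectedness guarantee from Theorem \ref{lem:main} (making $A$ and $B$ connected $P_5$-free graphs, hence of diameter at most three)---are exactly the shapes needed to push the argument through this delicate regime, and I expect them to be combined to handle it.
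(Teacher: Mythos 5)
Your proposal correctly identifies the key obstacle: iterating \Cref{lem:main} by binary subdivision gives only $2^d$ pairwise anti-complete parts each shrunk by a factor $m^{13}$ per level, and the resulting inequality $d + \tfrac1{16}(\log n - 13d\log m)^{2/3} \ge \tfrac1{16}(\log n)^{2/3}$ holds only when $\log m = O((\log n)^{1/3})$, which leaves the entire range $(\log n)^{1/3} \ll \log m < (\log n)^{2/3}/16$ uncovered. You recognize this and sketch two possible fixes, but neither is what closes the gap, and as written the proof does not go through.

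The speculations you offer---pulling out many mutually anti-complete components from $N_{00}(e)$ at once, or iterating the pure pair lemma unbalancedly against a large remainder---still aim to produce a family of pairwise \emph{anti-complete} parts. The paper's argument does not achieve that and, importantly, does not need to: it produces a family $A_0,\ldots,A_{t-1}$ that is merely pairwise \emph{pure}, where different pairs may be pure in different colours, and then invokes \Cref{r-partite-incons-bound} (not \Cref{r-partite-bound}) precisely because the colours are mixed. Getting even this weaker structure requires substantially more than iterating the pure pair lemma. The paper works with a vertex-minimal counterexample $G$; establishes two ``win conditions'' (\Cref{cl:win-purepair-1}, \Cref{cl:win-purepair-2}) stating that certain large pure pairs cannot exist in $G$; fixes a lexicographically \emph{maximum} good anti-complete pair $(A,B)$ from \Cref{lem:main}; and then runs a long structural analysis (the sets $X,C,D,E$, the refinements $A',B',C',D',E',C_b',E_b',D''$) to prove a ``main claim'': a single set $A$ of size $\ge n^{1-\eps_n}$ is consistent to all but $|A|$ vertices of $G'$, with the inconsistent set $Err$ small, the blue side $Blu$ and red side $Red$ both of size $\ge 2n^{1-\eps_n^2}$, and a degree bound controlling how red vertices see $Blu$. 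It is this claim, iterated over shrinking $G'$, combined with the separate argument (\Cref{cl:A_i-A_j-consistent}) that any two extracted $A_i, A_j$ form a pure pair, that produces the needed family of $\sim n^{\Theta(\eps_n^2)}$ pairwise pure parts. None of that mechanism appears in your proposal, and without it (or an equally strong substitute) the inductive step cannot beat the $2^{\Omega(\sqrt{\log n \log\log n})}$ barrier you are implicitly running into in the intermediate regime.
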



\begin{proof}
Let $c=\frac{1}{16}$ . 
Let $f(x):=2^{c(\log x)^{2/3}}$ and observe that it is increasing and concave on $[1,\infty)$ so we may apply \Cref{lem:concave} to $f$. Define $f(0)=0$ and observe that the restriction of $f$ on $\mathbb{N}$ is also concave so that we may apply \Cref{r-partite-bound} to $f$ as well. Let us also define for convenience $\eps_n:=\frac{1}{(\log n)^{1/3}}$ so that $f(n)=n^{c\eps_n}$.

Suppose towards a contradiction that there exists a $P_5$-free $n$-vertex graph which does not contain a cograph with $f(n)=2^{c(\log n)^{2/3}}=n^{c\eps_n}$ vertices. Let $G$ be such a graph with the minimum number of vertices. In particular, this implies that 
\vspace{-0.2cm}
\begin{equation}\label{eq:crit-G}
\text{any } m \text{-vertex } H \subsetneq G\text{ contains a cograph of size at least }f(m)=2^{c(\log m)^{2/3}}=m^{c\eps_m}\ge m^{c\eps_n}.
\end{equation}
We note that we will, in most cases, use the above inequality in its most wasteful form $f(m)\ge m^{c\eps_n}$, which is not going to be a big loss for us since we will mostly apply it to very large sets $m$ so that $\eps_m \approx \eps_n$. There is only one place in the proof where we need the more precise form, namely at the very end of the proof when we apply it to much smaller subgraphs.

The result is trivial if $n=1$ and note that any graph with $n \ge 2$ vertices has a homogeneous set of size at least two, so we are done unless $2^{c(\log n)^{2/3}}>2$. This  implies $(\log n)^{1/3} > 4$ so $\eps_n \le 1/4$. 

The main goal of the first part of the proof is to show the following claim.

\textbf{Main claim.} Let $G' \subseteq G$ satisfy $|G'| \ge n-n^{1-\eps_n^2}$. Then we can find $A=A(G'),Blu=Blu(G'), Red=Red(G')$ and
$Err=Err(G')$ which partition $V(G')$ and satisfy the following properties:
\begin{enumerate}[label=\textbf{\alph*)}]
    \item\label{itm:a} $|A|\ge n^{1-\eps_n}$,
    \item\label{itm:b} $|Blu| \ge 2n^{1-\eps_n^2}$ and $(A,Blu)$ is a blue pair,
    \item\label{itm:c} $|Red| \ge 2n^{1-\eps_n^2}$ and $(A,Red)$ is a red pair and every vertex in $Red$ sends at most $n^{\eps_n^2}|A|$ blue edges to $Blu$,
    \item \label{itm:d} $|Err| \le |A|.$
\end{enumerate}

One should think of $A$ as the set of nice new vertices which behaves extremely nicely with respect to the rest of our subgraph $G'$, namely, apart from a tiny error set $Err$, every vertex outside $A$ is consistent to it, and there are relatively many vertices which are red to $A$ and which are blue to $A$.
The role of working in a subgraph $G'$ of almost the full size is so that we can iteratively apply the main claim by repeatedly removing sets $A$ and $Err$ until they cover at least $n^{1-\eps_n^2}$ vertices and, in particular, by the assumption \ref{itm:d} this will guarantee that the $A$ sets have at least this size divided by two. The final part of the argument will be to show any pair of such $A$ sets need to make a pure pair (not all necessarily of the same colour) and then use this to obtain a cograph of desired size via \Cref{r-partite-incons-bound}.

Since the expressions involved are somewhat technical, let us try to give an intuition of how one can think of various subset sizes we will work with. Namely, we think of any set of size smaller than $n^{1-\Theta(\eps_n)}$ as being ``small'', any set of size larger than this is considered to be of at least ``medium'' size, while any set larger than $n^{1-\Theta(\eps_n^2)}$ is considered to be ``large''. Finally, any set of size at least $n-n^{\Theta(\eps_n^2)}$ is considered ``huge''. 

The next two lemmas will establish the so-called ``win'' conditions, namely the sizes of parts of a pure pair which guarantee us a large enough cograph to give us a contradiction. The first one says that we win if we can find a pure pair with both parts large, while the second one, in some sense, says we win if one part of the pair is of size at least medium and the other is huge.

\begin{claim}\label{cl:win-purepair-1}
There is no pure pair $(Y, Z)$ in $G$ with $|Y|,|Z| \ge n^{1-\eps_n^2/c}$.
\end{claim}

\begin{cla_proof} 
Let us suppose towards a contradiction that such a pair exists. Using the criticality assumption \eqref{eq:crit-G}, we can find inside of $G[Y]$ a cograph of size at least $f(|Y|)$ and inside of $G[Z]$ a cograph of size $f(|Z|)$. Since $(Y, Z)$ is a pure pair, we can combine these two cographs to get one of order at least $f(|Y|)+f(|Z|)$ in $G$ and using our initial assumption that there is no cograph of order at least $f(n)$ in $G$ this leads to the following contradiction: 
$$f(n) > f(|Y|)+f(|Z|) \geq |Y|^{c\eps_n}+|Z|^{c\eps_n}\ge  2\cdot n^{c\eps_n-\eps_n^3}=n^{c\eps_n}=f(n),$$
where the penultimate equality follows by the definition of $\eps_n$ which implies $n^{\eps_n^3}=2$.
\end{cla_proof}

The following claim is an asymmetric version of the one above, and one can think of it as saying that the existence of a pure pair in which one side is not tiny and one is huge leads to a contradiction, but we do need slightly more flexible notions of tiny and huge here. Although the idea is the same as used above, the proof is slightly technical.

\begin{claim}\label{cl:win-purepair-2}
There is no pure pair $(Y,Z)$ with both $|Y|,|Z|\ge n^{1-\eps_n/c}$ and $|Y|+|Z|\ge n-7n^{1-\eps_n^2}.$
\end{claim}

\begin{cla_proof} 
Let us suppose towards a contradiction that such a pair exists and suppose without loss of generality that $|Y| \le |Z|$. Let us denote by $x:=|Y|-n^{1-\eps_n/c}$. 
Then, $|Z|+x\ge n-7n^{1-\eps_n^2}-n^{1-\eps_n/c}\ge n-8n^{1-\eps_n^2}$. 

Using the criticality of $G$ via \eqref{eq:crit-G} implies we can find inside $G[Y]$ and $G[Z]$ a cograph of order $f(|Y|)$ and $f(|Z|)$ respectively. Since $(Y, Z)$ is a pure pair, we can combine them into a cograph of order at least $f(|Y|)+f(|Z|)$ in $G$. Using our initial assumption that there is no cograph of order at least $f(n)$ in $G$ together with the concavity of $f(x)$ through \Cref{lem:concave} as well as the fact it is increasing we get the following contradiction
\begin{align*}
  f(n) > f(|Y|)+f(|Z|) \ge f(|Y|-x) +f(|Z|+x) &\ge f\left(n^{1-\eps_n/c}\right)+f\left(n-8n^{1-\eps_n^2}\right)\\
  &=  n^{c\eps_n-\eps_n^2}+\left(n-8n^{1-\eps_n^2}\right)^{c\eps_n}\\
  &= n^{c\eps_n-\eps_n^2}+n^{c\eps_n}\cdot\left(1-8n^{-\eps_n^2}\right)^{c\eps_n}\\
  &\ge n^{c\eps_n-\eps_n^2}+n^{c\eps_n}\cdot\left(1-n^{-\eps_n^2}\right)=n^{c\eps_n}=f(n),
\end{align*}
where in the last inequality we used $c\eps_n \le \frac1{16}$ and $n^{-\eps_n^2}=2^{-(\log n)^{1/3}}\le \frac1{16}$ so that we may apply the inequality $(1-8y)^{1/16}\ge 1-y$ which holds for all $y\le \frac1{16}$.
\end{cla_proof}


The above lemmas were the final preliminary steps of the proof. We are now ready to start with the main part of the proof of the main claim.
The first step is to find an anti-complete pure pair with both parts being not tiny, and this will come courtesy of \Cref{lem:main}.
We say that a pair of disjoint sets of vertices $(A, B)$ is \emph{good} if all edges between $A$ and $B$ are blue, both $A$ and $B$ span connected subgraphs in red and $|A|\le |B|$. In particular, since a homogeneous set is a cograph, we have $\hom(G)<f(n)$ and \Cref{lem:main} guarantees us that we can find a good pair $(A, B)$ in $G'$ such that 
\begin{equation}\label{eq:A-size-bound}
|B| \ge |A| \geq \frac{n}{(f(n))^{13}}= n^{1-13c\eps_n} \ge n^{1-\eps_n}.
\end{equation}
We pick $(A, B)$ to be a maximum size good pair, in the lexicographic sense, so for any other good pair $(A', B')$, we have either $|A'| < |A|$ or if $|A'| = |A|$, then $|B'| \leq |B|$. In particular, \eqref{eq:A-size-bound} holds for our pair. We note also that the assumption $|A|\le |B|$ only plays a role much later in the proof, so up until that point, we may and will treat $A$ and $B$ as symmetric in order to avoid repeating the same argument twice. We will clearly mark the point in the proof at which we break the symmetry.

We begin with a simple claim which already imposes quite a strong structural restriction on our colouring.

\begin{claim}\label{cl:consistent-AB} 
Every vertex $v\not\in A\cup B$ is consistent to $A$ or consistent to $B$.
\end{claim}

\begin{cla_proof}
Suppose otherwise. Then, we have some vertex $v$ which is not consistent with respect to both $A$ and $B$. This means via \Cref{lem:cute-nice-obs} that there are vertices $a,a' \in A$ such that $va,aa'$ are red and $va'$ is blue and $b,b' \in B$ such that $vb,bb'$ are red and $vb'$ is blue. Since $(A,B)$ is a blue pair, all edges between $\{a,a'\}$ and $\{b,b'\}$ are blue which means $a'avbb'$ induces a red $P_5$, a contradiction. See \Cref{fig:consistent-AB} for an illustration.
\end{cla_proof}
\begin{figure}
\RawFloats
\begin{minipage}[t]{0.49\textwidth}
\centering
\captionsetup{width=\textwidth}
\begin{tikzpicture}[scale=1.1, rotate=90]
\defPt{0}{0}{x0}
\defPt{1.5}{1.2}{x1}
\defPt{1.5}{-1.2}{x2}
\defPt{2.5}{0.6}{x3}
\defPt{2.5}{-0.6}{x4}

\fitellipsiss{$(x1)$}{$(x3)$}{0.6};
\fitellipsiss{$(x2)$}{$(x4)$}{0.6};

\draw[line width = 2 pt, red]  (x3) -- (x1) -- (x0) -- (x2) -- (x4);

\draw[line width = 1 pt, blue] (x3) -- (x2);
\draw[line width = 1 pt, blue] (x3) -- (x4);
\draw[line width = 1 pt, blue] (x3) -- (x0);
\draw[line width = 1 pt, blue] (x4) -- (x0);
\draw[line width = 1 pt, blue] (x4) -- (x1);
\draw[line width = 1 pt, blue] (x2) -- (x1);

\node[] at ($(x0)+(-0.3,0)$) {$v$};
\node[] at ($(x1)+(-0.1,0.25)$) {$a$};
\node[] at ($(x2)+(-0.1,-0.25)$) {$b$};
\node[] at ($(x3)+(0.1,0.25)$) {$a'$};
\node[] at ($(x4)+(0.1,-0.25)$) {$b'$};
\node[] at ($0.5*(x1)+0.5*(x3)+(0.4,0.9)$) {$A$};
\node[] at ($0.5*(x2)+0.5*(x4)-(-0.4,0.9)$) {$B$};

\foreach \i in {0,...,4}
{
\draw[] (x\i) \smvx;
}
\end{tikzpicture}
    \captionsetup{width=\textwidth}
    \caption{$P_5$ leading to the contradiction in \Cref{cl:consistent-AB}.}
    \label{fig:consistent-AB}
\end{minipage}\hfill
\begin{minipage}[t]{0.49\textwidth}
\centering
\begin{tikzpicture}[yscale=0.8]
\defPt{0}{0}{D}
\defPt{-1}{0}{C}
\defPt{1}{0}{E}
\defPt{-1.5}{2}{A}
\defPt{1.5}{2}{B}
\defPt{0}{4}{X}


\foreach \i in {-2,-1,0,1,2}
{
    \foreach \j in {-1,0,1}
    {
    \draw[line width = 1 pt,blue] ($(X)+0.3*(\i,0)$) -- ($(A)+0.2*(\j,0)$);
    }
}

\foreach \i in {-2,-1,0,1,2}
{
    \foreach \j in {-1,0,1}
    {
    \draw[line width = 1 pt,blue] ($(X)+0.3*(\i,0)$) -- ($(B)+0.2*(\j,0)$);
    }
}

\foreach \i in {-2,-1,0,1,2}
{
    \foreach \j in {-2,-1,0,1,2}
    {
    \draw[line width = 1 pt,blue] ($(A)+0.3*(0,\i)$) -- ($(B)+0.3*(0,\j)$);
    }
}

\foreach \i in {-2,-1,0,1,1}
{
    \foreach \j in {-1,0,1}
    {
    \draw[line width = 1 pt,red] ($(A)+0.2*(0,\i)$) -- ($(D)+(0.5,0)+0.2*(\j,0)$);
    }
}

\foreach \i in {-2,-1,0,1,1}
{
    \foreach \j in {-1,0,1}
    {
    \draw[line width = 1 pt,red] ($(B)+0.2*(0,\i)$) -- ($(D)+(-0.5,0)+0.2*(\j,0)$);
    }
}

\foreach \i in {-2,-1,0,1,1}
{
    \foreach \j in {-2,-1,0,1,2}
    {
    \draw[line width = 1 pt,red] ($(A)+0.3*(0,\i)$) -- ($(C)+0.15*(\j,0)$);
    }
}

\foreach \i in {-2,-1,0,1,1}
{
    \foreach \j in {-2,-1,0,1,2}
    {
    \draw[line width = 1 pt,red] ($(B)+0.3*(0,\i)$) -- ($(E)+0.15*(\j,0)$);
    }
}

\fitellipsiss{$(C)$}{$(E)$}{0.6};
\fitellipsiss{$(A)+(0.2,0.5)$}{$(A)-(0.2,0.5)$}{0.6};
\fitellipsiss{$(B)+(-0.2,0.5)$}{$(B)-(-0.2,0.5)$}{0.6};
\fitellipsiss{$(X)+(0.5,0)$}{$(X)-(0.5,0)$}{0.6};

\draw[] ($(X)+(0.6,-0.5)$) -- ($(X)+(0.6,0.5)$);
\draw[] ($(X)+(0,-0.6)$) -- ($(X)+(0,0.6)$);
\draw[] ($(X)+(-0.6,-0.5)$) -- ($(X)+(-0.6,0.5)$);

\draw[] ($0.5*(C)+0.5*(D)+(0,-0.57)$) -- ($0.5*(C)+0.5*(D)+(0,0.57)$);
\draw[] ($0.5*(E)+0.5*(D)+(0,-0.57)$) -- ($0.5*(E)+0.5*(D)+(0,0.57)$);

\pic[scale=0.25,rotate=15,red] at ($(A)-(0.2,0.15)$) {conn};
\pic[scale=0.25,rotate=35,red] at ($(B)-(0.2,0.15)$) {conn};
\pic[scale=0.1,red] at ($(X)-(0.9,0)$) {conn};
\pic[xscale=0.13, yscale=0.2,red] at ($(X)-(0.4,0.1)$) {conn};
\pic[scale=0.1,red] at ($(X)+(0.73,0)$) {conn};
\pic[xscale=0.13,yscale=0.2, red] at ($(X)+(0.2,-0.1)$) {conn};

\node[] at ($(X)+(-1.5,0)$) {$X$};
\node[] at ($(A)+(-0.9,0.2)$) {$A$};
\node[] at ($(B)+(0.9,0.2)$) {$B$};
\node[] at ($(C)$) {$C$};
\node[] at ($(D)$) {$D$};
\node[] at ($(E)$) {$E$};

\end{tikzpicture}
    \captionsetup{width=\textwidth}
    \caption{Overall structure of the colouring.}
    \label{fig:overall}
\end{minipage}
\end{figure}

\begin{claim}\label{cl:X-claim}
Any vertex $v\not\in A\cup B$ which is blue to $A$ or blue to $B$ is blue to $A\cup B$. 
\end{claim}
\begin{cla_proof}
Assume for the sake of contradiction that some vertex $v$ is blue to $A$, but not $B$. Then, $B \cup \{v\}$ induces a connected graph in red which is complete to $A$ in blue. This means that $(A, B \cup \{v\})$ forms a good pair, which violates the maximality of $(A, B)$.  We can repeat the argument with roles of $A$ and $B$ switched to obtain the symmetric case. 
\end{cla_proof}

By \Cref{cl:consistent-AB} and \Cref{cl:X-claim}, we can partition (allowing sets to be empty) the vertices of $V(G)\setminus (A\cup B)$ so that: $X$ are the vertices blue to $A\cup B$; $C$ are red to $A$, but not consistent to $B$; $D$ is red to $A\cup B$; and $E$ is red to $B$ but not consistent to $A$. See \Cref{fig:overall} for an illustration.

We now show that $X$ is not much larger than $A$.
\begin{claim}\label{cl:bound-on-X}
$$|X| \le |A|\cdot 2n^{c\eps_n^2}.$$
\end{claim}

\begin{cla_proof}
Let us assume towards a contradiction that 
$$\frac{|X|}{|A|} > 2n^{c\eps_n^2} \ge 1.$$
Let us partition $G[X]$ into connected components $X_1,\ldots, X_r$ in red. Observe that $|X_i| \le |A|$ since otherwise we could use the pair $(X_i,B)$ to contradict maximality of $(A,B)$. Observe also that $G[X]=G[X_1 \cup \ldots \cup X_r]$ is an $r$-partite complete graph in blue with each part of size at most $|A|$. So \Cref{r-partite-bound}, applied with our concave, increasing function $f(n)=2^{c(\log n)^{2/3}}$, to this graph guarantees a cograph of size at least $\floor{|X|/|A|}f(|A|)$ which is by our initial assumption smaller than $f(n)$. Plugging in the values, we obtain the following contradiction
\begin{align*}
    n^{c\eps_n}>\floor{\frac{|X|}{|A|}}\cdot |A|^{c\eps_n}
                       \ge n^{c\eps_n^2} \cdot \left(n^{1-\eps_n}\right)^{c\eps_n}
                       = n^{c\eps_n},
\end{align*}
where in the first inequality we used $\floor{x}\ge x/2$ for $x \ge 1$ and in the second we used \eqref{eq:A-size-bound}.\end{cla_proof}

For certain structural reasons, which will become apparent soon, we would now like $G[A]$ and $G[B]$ to be connected in blue. Unfortunately, this might not be the case and we are forced to introduce $A'$ and $B'$ as the largest blue components of $G[A]$ and $G[B]$, respectively. While we are for a while going to focus on developing a similar, and in fact more detailed picture with respect to the blue pair $(A', B')$ in place of $(A, B)$, we do lose the maximality assumption, which is why we will keep in mind (and in our figures) the maximal pair $(A, B)$ and gradually conclude the structure with respect to it from the one with respect to $(A', B')$.

We note that since there can be no red clique of size greater than $f(n)$, there are at most $f(n)$ connected components in the blue graph of both $G[A]$ and $G[B]$. So since we picked $A'$ and $B'$ to span the largest component we get from \eqref{eq:A-size-bound}:
\begin{equation}\label{eq:a-prime-bnd}
    |A'| \geq \frac{|A|}{f(n)} \geq n^{1-\eps_n},\quad\quad\quad
    |B'| \geq \frac{|B|}{f(n)} \geq n^{1-\eps_n}
\end{equation}
This shows $A', B'$ are both still of at least medium size and will allow us to use \Cref{cl:win-purepair-2} to establish they are not in a pure pair with the other side being huge. 

Let $D'\supseteq D$ be the set of vertices red to both $A'$ and $B'$. Define $C'\subseteq C$ to be the set of vertices in $C$ which are not red to $B'$ and similarly $E'\subseteq E$ as the set of vertices of $E$ not red to $A'$. Note that by construction $C' \cup D' \cup E' = C \cup D \cup E$. We note that the individual sets $C, D$, and $E$ will not play a role in the argument after this point. See \Cref{fig:overall3} for an illustration of the structure established by this point.

\begin{figure}
\RawFloats
\begin{minipage}[t]{0.495\textwidth}
\centering
\captionsetup{width=\textwidth}
    \begin{tikzpicture}[yscale=0.8]
\defPt{0}{0}{D}
\defPt{-2}{0}{C}
\defPt{2}{0}{E}
\defPt{-1.5}{2}{A}
\defPt{1.5}{2}{B}
\defPt{0}{4}{X}

\foreach \i in {-2,-1,0,1,2}
{
    \foreach \j in {-1,0,1}
    {
    \draw[line width = 1 pt,blue] ($(X)+0.3*(\i,0)$) -- ($(A)+0.2*(\j,0)$);
    }
}

\foreach \i in {-2,-1,0,1,2}
{
    \foreach \j in {-1,0,1}
    {
    \draw[line width = 1 pt,blue] ($(X)+0.3*(\i,0)$) -- ($(B)+0.2*(\j,0)$);
    }
}

\foreach \i in {-2,-1,0,1,2}
{
    \foreach \j in {-2,-1,0,1,2}
    {
    \draw[line width = 1 pt,blue] ($(A)+0.3*(0,\i)$) -- ($(B)+0.3*(0,\j)$);
    }
}

\foreach \i in {-3,...,3}
{
    \foreach \j in {-2,-1,0,1,2,3,4,5,6,7}
    {
    \draw[line width = 0.5 pt,red] ($(A)+0.3*(0,\i)$) -- ($(C)+0.15*(\j,0)$);
    }
}

\foreach \i in {-3,...,3}
{
    \foreach \j in {-7,...,2}
    {
    \draw[line width = 0.5 pt,red] ($(B)+0.3*(0,\i)$) -- ($(E)+0.15*(\j,0)$);
    }
}

\fitellipsiss{$(A)+(0.2,0.5)$}{$(A)-(0.2,0.5)$}{0.6};
\fitellipsiss{$(B)+(-0.2,0.5)$}{$(B)-(-0.2,0.5)$}{0.6};

\foreach \i in {-2,...,1}
{
    \foreach \j in {-4,...,4}
    {
    \draw[line width = 0.5 pt,red] ($(A)+(0.05,0.125)+0.2*(0,\i)$) -- ($(D)+(0.5,0)+0.2*(\j,0)$);
    }
}

\foreach \i in {-2,...,1}
{
    \foreach \j in {-4,...,4}
    {
    \draw[line width = 0.5 pt,red] ($(B)-(0.05,-0.125)+0.2*(0,\i)$) -- ($(D)+(-0.5,0)+0.2*(\j,0)$);
    }
}

\fitellipsiss{$(A)+(0.1,0.25)$}{$(A)-(0.1,0.25)$}{0.4};
\fitellipsiss{$(B)+(-0.1,0.25)$}{$(B)-(-0.1,0.25)$}{0.4};

\fitellipsiss{$(C)$}{$(E)$}{0.6};
\fitellipsiss{$(X)+(0.5,0)$}{$(X)-(0.5,0)$}{0.6};

\draw[] ($(X)+(0.6,-0.5)$) -- ($(X)+(0.6,0.5)$);
\draw[] ($(X)+(0,-0.6)$) -- ($(X)+(0,0.6)$);
\draw[] ($(X)+(-0.6,-0.5)$) -- ($(X)+(-0.6,0.5)$);

\draw[dashed] ($0.25*(C)+0.75*(D)+(0,-0.57)$) -- ($0.25*(C)+0.75*(D)+(0,0.57)$);
\draw[dashed] ($0.25*(E)+0.75*(D)+(0,-0.57)$) -- ($0.25*(E)+0.75*(D)+(0,0.57)$);

\draw[] ($0.4*(C)+0.6*(D)+(0,-0.57)$) -- ($0.4*(C)+0.6*(D)+(0,0.57)$);
\draw[] ($0.4*(E)+0.6*(D)+(0,-0.57)$) -- ($0.4*(E)+0.6*(D)+(0,0.57)$);

\draw [thick,decorate,decoration={brace,amplitude=7pt,raise=4pt,mirror},yshift=0pt]($0.4*(C)+0.6*(D)+(0,-0.57)$) -- ($0.4*(E)+0.6*(D)+(0,-0.57)$) node [black,midway,xshift=0cm,yshift=-0.6cm] {$D'$};

\draw [thick,decorate,decoration={brace,amplitude=5pt,raise=2pt,mirror},yshift=0pt]($1.3*(C)-0.3*(D)+(0,-0.17)$) -- ($0.25*(C)+0.75*(D)+(0,-0.57)$) node [black,midway,xshift=0cm,yshift=-0.5cm] {$C$};

\draw [thick,decorate,decoration={brace,amplitude=5pt,raise=2pt},yshift=0pt]($1.3*(E)-0.3*(D)+(0,-0.17)$) -- ($0.25*(E)+0.75*(D)+(0,-0.57)$) node [black,midway,xshift=0cm,yshift=-0.5cm] {$E$};

\pic[scale=0.15,rotate=15,blue] at ($(A)-(0.1,0.05)$) {conn};
\pic[scale=0.15,rotate=35,blue] at ($(B)-(0.1,0.15)$) {conn};
\pic[scale=0.1,red] at ($(X)-(0.9,0)$) {conn};
\pic[xscale=0.13, yscale=0.2,red] at ($(X)-(0.4,0.1)$) {conn};
\pic[scale=0.1,red] at ($(X)+(0.73,0)$) {conn};
\pic[xscale=0.13,yscale=0.2, red] at ($(X)+(0.2,-0.1)$) {conn};

\node[] at ($(X)+(-1.5,0)$) {$X$};
\node[] at ($(A)+(0.25,0.85)$) {$A'$};
\node[] at ($(B)+(-0.25,0.82)$) {$B'$};
\node[] at ($(A)+(-0.9,0.2)$) {$A$};
\node[] at ($(B)+(0.9,0.2)$) {$B$};
 \node[] at ($(D)+(0,0)$) {$D$};

\node[] at ($0.8*(C)+0.2*(D)+(0,0)$) {$C'$};
\node[] at ($0.8*(E)+0.2*(D)+(0,0)$) {$E'$};

\end{tikzpicture}
    \caption{Structure of the colouring w.r.t.\ $A'$ and $B'$.}
    \label{fig:overall3}
\end{minipage}\hfill
\begin{minipage}[t]{0.495\textwidth}
\centering
\captionsetup{width=\textwidth}
\begin{tikzpicture}[scale=1.1, rotate=90]
\defPt{0}{-0.9}{x1}
\defPt{0}{0.5}{x0}
\defPt{1.5}{-1.2}{x2}
\defPt{2}{1}{x3}
\defPt{2.5}{-0.6}{x4}

\fitellipsiss{$(x2)$}{$(x4)$}{0.6};
\fitellipsiss{$(x3)+(0.5,-0.3)$}{$(x3)-(0.5,-0.3)$}{0.6};
\fitellipsiss{$(x1)+(0,0.1)$}{$(x1)-(0,0.5)$}{0.6};
\fitellipsiss{$(x0)+(0,0.1)$}{$(x0)+(0,0.7)$}{0.6};

\draw[line width = 2 pt, red]  (x3) -- (x0);
\draw[line width = 2 pt, red] (x0) -- (x4);
\draw[line width = 2 pt, red] (x4) -- (x1);
\draw[line width = 2 pt, red] (x1) -- (x2);

\draw[line width = 1 pt, blue] (x4) -- (x2);
\draw[line width = 1 pt, blue] (x4) -- (x3);
\draw[line width = 1 pt, blue] (x2) -- (x0);
\draw[line width = 1 pt, blue] (x1) -- (x3);
\draw[line width = 1 pt, blue] (x1) -- (x0);
\draw[line width = 1 pt, blue] (x2) -- (x3);

\node[] at ($(x1)+(-0.3,0)$) {$u$};
\node[] at ($(x0)+(-0.3,0)$) {$v$};
\node[] at ($(x2)+(-0.1,-0.25)$) {$b$};
\node[] at ($(x3)+(0.1,0.25)$) {$b'$};
\node[] at ($(x4)+(0.1,-0.25)$) {$r$};
\node[] at ($(x3)+(0.4,0.9)$) {$A'$};
\node[] at ($(x0)+(0,1.6)$) {$C_b'$};
\node[] at ($(x1)+(0,-1.4)$) {$E_b'$};
\node[] at ($0.5*(x2)+0.5*(x4)-(-0.4,0.9)$) {$B'$};

\foreach \i in {0,...,4}
{
\draw[] (x\i) \smvx;
}
\end{tikzpicture}
    \caption{$P_5$ leading to the contradiction in \Cref{cl:Cb-consistent-A-pr}.}
    \label{fig:Cb-consistent-A-pr}
\end{minipage}
\end{figure}

Now let us define $C_b'$ as the subset of vertices in $C'$ which have a blue neighbour in $E'$ and $C_r' = C'\setminus C_b'$, while similarly $E_b'$ is the set of vertices in $E'$ which have a blue neighbour in $C'$ and $E_r'= E'\setminus E_b'$. 
The following claim establishes restrictions on colours incident to these four new sets.
\begin{claim}\label{cl:Cb-consistent-A-pr} \textcolor{white}{something hidden}
\begin{enumerate}[label=\roman*)]
    \item $C_r'$ is red to $E'$,
    \item $E_r'$ is red to $C'$,
    \item\label{enum-iii)} $C_b'$ is blue to $B'$ and
    \item $E_b'$ is blue to $A'$.
\end{enumerate}
\end{claim}
\begin{cla_proof}
i) and ii) follow immediately from the definition of $C_b'$ and $E_b'$.

To see iii), assume for the sake of contradiction that some $v\in C_b'$ is not blue towards $B'$. Since $v \notin D'$, we also know $v$ is not red to $B'$, so is in particular not consistent to $B'$. Hence, by \Cref{lem:cute-nice-obs}, we can find vertices $r,b\in B'$ such that $rb$ is a blue edge (since $G[B']$ is connected in blue) with $vr$ being red and $vb$ being blue. We can find a $u\in E_b'$ such that $uv$ is blue by construction of $C_b'$
and find $b'\in A$ that is blue to $u$, as $u \notin D'$. This implies $b'vrub$ is an induced red $P_5$, a contradiction. The proof of iv) follows symmetrically. See \Cref{fig:Cb-consistent-A-pr} for an illustration.
\end{cla_proof}

See \Cref{fig:overall4} for an illustration of the structure established by \Cref{cl:Cb-consistent-A-pr}. 

\begin{figure}
\RawFloats
\begin{minipage}[t]{0.495\textwidth}
\centering
\captionsetup{width=\textwidth}
    \begin{tikzpicture}[yscale=0.8]
\defPt{0}{0}{D}
\defPt{-2}{0}{C}
\defPt{2}{0}{E}
\defPt{-1.5}{2}{A}
\defPt{1.5}{2}{B}
\defPt{0}{4}{X}


\foreach \i in {-2,-1,0,1,2}
{
    \foreach \j in {-1,0,1}
    {
    \draw[line width = 1 pt,blue] ($(X)+0.3*(\i,0)$) -- ($(A)+0.2*(\j,0)$);
    }
}

\foreach \i in {-2,-1,0,1,2}
{
    \foreach \j in {-1,0,1}
    {
    \draw[line width = 1 pt,blue] ($(X)+0.3*(\i,0)$) -- ($(B)+0.2*(\j,0)$);
    }
}

\foreach \i in {-2,-1,0,1,2}
{
    \foreach \j in {-2,-1,0,1,2}
    {
    \draw[line width = 1 pt,blue] ($(A)+0.3*(0,\i)$) -- ($(B)+0.3*(0,\j)$);
    }
}

\foreach \i in {-2,...,1}
{
    \foreach \j in {-2,...,7}
    {
    \draw[line width = 0.75 pt,red] ($(A)+0.3*(0,\i)$) -- ($(C)+0.15*(\j,0)$);
    }
}

\foreach \i in {-2,...,1}
{
    \foreach \j in {-7,...,2}
    {
    \draw[line width = 0.75 pt,red] ($(B)+0.3*(0,\i)$) -- ($(E)+0.15*(\j,0)$);
    }
}

\fitellipsiss{$(A)+(0.2,0.5)$}{$(A)-(0.2,0.5)$}{0.6};
\fitellipsiss{$(B)+(-0.2,0.5)$}{$(B)-(-0.2,0.5)$}{0.6};

\foreach \i in {-2,-1,0,1,1}
{
    \foreach \j in {-1,0,1}
    {
    \draw[line width = 1 pt,red] ($(A)+(0.05,0.125)+0.2*(0,\i)$) -- ($(D)+(0.5,0)+0.2*(\j,0)$);
    }
}

\foreach \i in {-2,-1,0,1,1}
{
    \foreach \j in {-1,0,1}
    {
    \draw[line width = 1 pt,red] ($(B)-(0.05,-0.125)+0.2*(0,\i)$) -- ($(D)+(-0.5,0)+0.2*(\j,0)$);
    }
}

\foreach \i in {-2,-1,0,1,2}
{
    \foreach \j in {-1,0,1}
    {
    \draw[line width = 0.5 pt,red] ($(C)-(0.1,0.1)+0.07*(\i,0)$) to[in=-60,out=-120] ($(E)+(0.1,-0.1)+0.07*(\j,0)$);
    }
}

\foreach \i in {-1,0,1}
{
    \foreach \j in {-1,0,1}
    {
    \draw[line width = 0.5 pt,red] ($(C)-(0.1,0.1)+0.07*(\i,0)$) to[in=-70,out=-110] ($(E)+(-0.87,-0.1)+0.07*(\j,0)$);
    }
}

\foreach \i in {-1,0,1}
{
    \foreach \j in {-1,0,1}
    {
    \draw[line width = 0.5 pt,red] ($(C)-(-0.92,0.1)+0.07*(\i,0)$) to[in=-70,out=-110] ($(E)+(0.1,-0.1)+0.07*(\j,0)$);
    }
}

\foreach \i in {-3,...,3}
{
    \foreach \j in {-1,...,1}
    {
    \draw[line width = 0.5 pt,blue] ($(A)+(0.05,0)+0.1*(0,\i)$) -- ($(E)-(0,0)+0.25*(\j,0)$);
    }
}

\foreach \i in {-3,...,3}
{
    \foreach \j in {-1,0,1}
    {
    \draw[line width = 0.5 pt,blue] ($(B)-(0.05,0)+0.1*(0,\i)$) -- ($(C)+(0,0)+0.25*(\j,0)$);
    }
}

\fitellipsiss{$(A)+(0.1,0.25)$}{$(A)-(0.1,0.25)$}{0.4};
\fitellipsiss{$(B)+(-0.1,0.25)$}{$(B)-(-0.1,0.25)$}{0.4};

\fitellipsiss{$(C)$}{$(E)$}{0.6};
\fitellipsiss{$(X)+(0.5,0)$}{$(X)-(0.5,0)$}{0.6};

\draw[] ($(X)+(0.6,-0.5)$) -- ($(X)+(0.6,0.5)$);
\draw[] ($(X)+(0,-0.6)$) -- ($(X)+(0,0.6)$);
\draw[] ($(X)+(-0.6,-0.5)$) -- ($(X)+(-0.6,0.5)$);

\draw[] ($0.3*(C)+0.7*(D)+(0,-0.57)$) -- ($0.3*(C)+0.7*(D)+(0,0.57)$);
\draw[] ($0.3*(E)+0.7*(D)+(0,-0.57)$) -- ($0.3*(E)+0.7*(D)+(0,0.57)$);

\draw[] ($0.8*(C)+0.2*(D)+(0,-0.47)$) -- ($0.8*(C)+0.2*(D)+(0,0.47)$);
\draw[] ($0.8*(E)+0.2*(D)+(0,-0.47)$) -- ($0.8*(E)+0.2*(D)+(0,0.47)$);

\pic[scale=0.15,rotate=15,blue] at ($(A)-(0.1,0.05)$) {conn};
\pic[scale=0.15,rotate=35,blue] at ($(B)-(0.1,0.15)$) {conn};
\pic[scale=0.1,red] at ($(X)-(0.9,0)$) {conn};
\pic[xscale=0.13, yscale=0.2,red] at ($(X)-(0.4,0.1)$) {conn};
\pic[scale=0.1,red] at ($(X)+(0.73,0)$) {conn};
\pic[xscale=0.13,yscale=0.2, red] at ($(X)+(0.2,-0.1)$) {conn};

\node[] at ($(X)+(-1.5,0)$) {$X$};
\node[] at ($(A)+(0.25,0.85)$) {$A'$};
\node[] at ($(B)+(-0.25,0.82)$) {$B'$};
\node[] at ($(A)+(-0.9,0.2)$) {$A$};
\node[] at ($(B)+(0.9,0.2)$) {$B$};
\node[] at ($(C)-(0,0)$) {$C_r'$};
\node[] at ($(C)+(0.92,0)$) {$C_b'$};
\node[] at ($(D)$) {$D'$};
\node[] at ($(E)-(0.87,0)$) {$E_b'$};
\node[] at ($(E)+(0,0)$) {$E_r'$};

\end{tikzpicture}%
    \caption{Structure established by i)--iv).}
    \label{fig:overall4}
\end{minipage}\hfill
\begin{minipage}[t]{0.495\textwidth}
    \centering
    \captionsetup{width=\textwidth}
    \begin{tikzpicture}[scale=1.1, rotate=90]
\defPt{-0.3}{-.9}{x4}
\defPt{0}{1}{x0}
\defPt{2}{-1.2}{x2}
\defPt{2}{1}{x3}
\defPt{0}{-1.5}{x1}

\fitellipsiss{$(x2)-(0.5,0.3)$}{$(x2)+(0.5,0.3)$}{0.6};
\fitellipsiss{$(x3)+(0.5,-0.3)$}{$(x3)-(0.5,-0.3)$}{0.6};
\fitellipsiss{$(x1)+(0,0.6)$}{$(x1)-(0,0)$}{0.6};
\fitellipsiss{$(x0)+(0,-0.3)$}{$(x0)+(0,0.3)$}{0.6};

\draw[line width = 2 pt, red]  (x3) -- (x0);
\draw[line width = 2 pt, red] (x0) -- (x4);
\draw[line width = 2 pt, red]  (x2) -- (x4);
\draw[line width = 2 pt, red] (x2) -- (x1);

\draw[line width = 1 pt, blue]  (x3) -- (x2);
\draw[line width = 1 pt, blue]  (x0) -- (x2);
\draw[line width = 1 pt, blue]  (x4) -- (x1);
\draw[line width = 1 pt, blue] (x4) -- (x3);
\draw[line width = 1 pt, blue] (x1) -- (x0);
\draw[line width = 1 pt, blue] (x1) -- (x3);

\node[] at ($(x1)+(-0.1,-0.25)$) {$b$};
\node[] at ($(x0)+(-0.3,0)$) {$v$};
\node[] at ($(x2)+(-0.1,-0.25)$) {$w$};
\node[] at ($(x3)+(0.1,0.25)$) {$u$};
\node[] at ($(x4)+(-0.1,-0.25)$) {$r$};
\node[] at ($(x3)+(0.4,0.9)$) {$A$};
\node[] at ($(x0)+(0,1.2)$) {$C_b$};
\node[] at ($(x4)+(0.3,-1.5)$) {$E_b$};
\node[] at ($(x2)-(-0.4,0.9)$) {$B'$};

\foreach \i in {0,...,4}
{
\draw[] (x\i) \smvx;
}
\end{tikzpicture}
    \caption{$P_5$ leading to the contradiction in \Cref{cl:blue-compo-cb-eb}.}
    \label{fig:blue-compo-cb-eb}
\end{minipage}
\end{figure}

\begin{claim}\label{cl:blue-compo-cb-eb}
Any blue connected component of $G[C_b']$ forms a pure pair with any blue connected component of $G[E_b']$.
\end{claim}
\begin{cla_proof}
Pick a blue component $C_b$ of $C_b'$ and a blue component $E_b$ of $E_b'$. Assume for a contradiction that $v\in C_b$ is not consistent to $E_b$. Then, using \Cref{lem:cute-nice-obs} we can find $b,r\in E_b$ such that $rb$ is a blue edge with $vr$ red and $vb$ blue. From iv) above, if we pick an arbitrary $u\in A'$, it will be blue to $E_b' \supseteq \{b,r\}$ and red to $C_b' \ni v$. Similarly, we can pick $w\in B'$ that is red to $\{b,r\}$ and blue to $v$. Since $uw$ is blue, this gives us that $uvrwb$ induces a $P_5$ in red, a contradiction. We can do a symmetric proof to show $E_b$ is consistent to $C_b$. Having $C_b$ consistent to $E_b$ and vice versa guarantees they form a pure pair, as desired. See \Cref{fig:blue-compo-cb-eb} for an illustration.
\end{cla_proof}

\begin{claim}\label{cl:x-b-e-pr-bound}
$|X| + |B| + |E'| \ge 6n^{1-\eps_n^2}$ and 
$|X|+|A|+|C'| \ge 6n^{1-\eps_n^2}$.
\end{claim}

\begin{cla_proof}
Suppose towards a contradiction that 
$|X|+|B|+|E_b'|+|E_r'| < 6n^{1-\eps_n^2}$. This implies a lower bound on the size of the remaining sets of vertices of our subgraph $G'$:
$$|A'|+|A \setminus A'|+|D'|+|C_b'|+|C_r'|\ge |G'| -6n^{1-\eps_n^2} \ge n -7n^{1-\eps_n^2}.$$ 

This combined with the observation that $(A',(A \setminus A')\cup D' \cup C_b'\cup C_r')$ make a red pure pair and \eqref{eq:a-prime-bnd}, which gives $|A'|\ge n^{1-\eps_n}\ge n^{1-\eps_n/c}$, implies through our second ``win'' condition, namely \Cref{cl:win-purepair-2} that 
$$|A \setminus A'|+|D'|+|C_b'|+|C_r'| \le n^{1-\eps_n/c}.$$

Combining the two above bounds implies 
$$|A'| \ge n-7n^{1-\eps_n^2}-n^{1-\eps_n/c}.$$ 
We deduce from \eqref{eq:a-prime-bnd} that $|B'|\ge n^{1-\eps_n} \ge n^{1-\eps_n/c}$ and combine it with the previous inequality to obtain $|A'|+|B'| \ge n-7n^{1-\eps_n^2}$. As we already observed (based on \eqref{eq:a-prime-bnd}) that $|A'|\ge n^{1-\eps_n}$, this results in a contradiction to our second win condition, \Cref{cl:win-purepair-2}, applied to the pure pair $(A',B')$.

Since we did not yet use the assumption that $|A|\le |B|$, the argument can be repeated symmetrically to obtain the second claim.
\end{cla_proof}

We will now break symmetry and start using the assumption that $|A| \le |B|$. Our next goal is to show that this implies $B$ and $C_r'$ contain most of our graph $G'$, namely that $C_r' \cup B$ is huge and that both $C_r'$ and $B$ and are large.

\begin{claim}\label{cl:bounds}
We have $|A|,|X|,|E'|,|B\setminus B'|,|C_b'|,|D'| < n^{1-8\eps_n^2}$ and 
$|B|,|C_r'|\ge 2n^{1-\eps_n^2}$. Additionally, $|B'|\ge n^{1-\eps_n^2}$.
\end{claim}
\begin{cla_proof}
The first desired inequality is a consequence of the assumption $|A|\le |B|$ and \Cref{cl:win-purepair-1}, since $(A,B)$ is a pure pair, which implies 
$$|A|< n^{1-\eps_n^2/c} = n^{1-16\eps_n^2}.$$
The second one comes by combining this with \Cref{cl:bound-on-X}: 
$$|X|<|A| \cdot 2n^{c\eps_n^2}\le n^{1-8\eps_n^2}.$$
This, in turn, combined with the first part of \Cref{cl:x-b-e-pr-bound}, shows that
\begin{equation}\label{eq:b-e-pr-bound}
    |B| + |E'| = |B| + |E_b'| + |E_r'| 
    \ge 6n^{1-\eps_n^2}-n^{1-8\eps_n^2},
\end{equation}
while the second part of \Cref{cl:x-b-e-pr-bound} and the above upper bounds on $|A|$ and $|X|$ give
\begin{equation}\label{eq:c-pr-bnd}
    |C'| = |C_b'| + |C_r'| \ge 5n^{1-\eps_n^2}.
\end{equation}
Furthermore, since $(C',E_r')$ is a red pair, we can deduce from our first win condition, \Cref{cl:win-purepair-1}, and \eqref{eq:c-pr-bnd} that 
\begin{equation}\label{eq:e-r-pr-bound}
    |E_r'| < n^{1-16\eps_n^2}.
\end{equation}
This, in turn, refines \eqref{eq:b-e-pr-bound} to give us
\begin{equation}\label{eq:b-eb-pr-bound}
    |B| + |E_b'| \ge 6n^{1-\eps_n^2}-2n^{1-8\eps_n^2}
\end{equation}
Suppose first towards a contradiction that $|E_b'|\ge 3n^{1-16\eps_n^2}.$ Since by \Cref{cl:Cb-consistent-A-pr} part (i) we have that $(E_b',C_r')$ is a red pair, by our first win condition (\Cref{cl:win-purepair-1}) we have $|C_r'|< n^{1-16\eps_n^2}$ which when combined with \eqref{eq:c-pr-bnd} implies $|C_b'|\ge 3n^{1-16\eps_n^2}$. Now using \Cref{lem:one-third-pure-pair} on both the blue graphs induced by $E_b'$ and $C_b'$, we conclude that either we find a pure pair with both parts of size at least $n^{1-16\eps_n^2},$ so sufficient for \Cref{cl:win-purepair-1} to give a contradiction, or the size of the largest blue component of both of these graphs is at least $\frac13$ of the size of the set itself, so $n^{1-16\eps_n^2}$. However, by \Cref{cl:blue-compo-cb-eb}, these two components make a pure pair which once again has sufficient size for \Cref{cl:win-purepair-1} to give a contradiction. Therefore, we have $|E_b'|< 3 n^{1-16\eps_n^2}.$ 

Using this bound combined with the above upper bound, \eqref{eq:e-r-pr-bound}, on $|E_r'|$ shows $|E'|=|E_r' \cup E_b'|\le 4n^{1-16\eps_n^2}< n^{1-8\eps_n^2}$. On the other hand, combined with \eqref{eq:b-eb-pr-bound} it implies $|B|\ge 5n^{1-\eps_n^2}$, which satisfies one of our desired bounds.

Here, once again, we can conclude via \Cref{lem:one-third-pure-pair} that $|B'|\ge \frac53n^{1-\eps_n^2},$ establishing the desired lower bound on $|B'|$.

Since $(B',B \setminus B'), (B',D')$ and $(B',C_b')$ are all pure pairs by \Cref{cl:win-purepair-1} we get the desired upper bounds $|B \setminus B'|$, $|D'|,|C_b'|\le n^{1-16\eps_n^2}.$  Combining the last of these with \eqref{eq:c-pr-bnd} gives the desired lower bound on $C_r'$. 
\end{cla_proof}

A key consequence of the previous claim is that since the eight sets considered in the previous claim partition the vertex set of $G'$, we must have that $C_r'$ and $B'$ contain a vast majority of the vertices in our graph:
\begin{equation}\label{eq:almost-evrything}
    |C_r'|+|B'| \ge |G'|- 6n^{1-8\eps_n^2}\ge n - 7n^{1-\eps_n^2}.
\end{equation}

Before turning to the next claim, let us introduce $B''$ as the largest connected component in the red graph induced on $B'$. By \Cref{lem:one-third-pure-pair}, we know $|B''|\ge \frac13|B'|$, as otherwise we are done by the first win condition \Cref{cl:win-purepair-1}. This combined with \Cref{cl:bounds} gives
\begin{equation*}
|B''|\ge \frac 13 |B'|  \ge \frac 13 n^{1-\eps_n^2} > n^{1-8\eps_n^2}> |A|.    
\end{equation*}

Our next observation is that $C_b'$ and $E_b'$ must be empty. 
\begin{claim}\label{cl:cb-empty}
$C_b' = \emptyset$ and $E_b' = \emptyset$.
\end{claim}
\begin{cla_proof}
We first show that $C_b'=\emptyset$. Suppose towards a contradiction that this is not the case. Observe that $B'$ is blue to $A\cup C_b'$ using \Cref{cl:Cb-consistent-A-pr} part \ref{enum-iii)} and since $C_b' \subseteq C'$ we know $C_b'$ is red to $A$ so, in particular, $G[A \cup C_b']$ is connected in red. This and $B''$ being connected in red implies $(A\cup C_b',\ B'')$ is a good pair and since $|B''|>|A|$ this pair contradicts the maximality of $(A, B)$ since $|A \cup C_b'|,|B'|>|A|$ (using that $C_b'\neq \emptyset$). This means there are no vertices in $C'$ which have a blue neighbour in $E'$, so also there are no vertices in $E'$ with a blue neighbour in $C'$, implying also that $E_b' = \emptyset.$
\end{cla_proof}

This shows $C'=C_r'$ and $E'=E_r'$. The most problematic of our remaining sets is $D'$, since we do not know anything about the edges between $D'$ and our large, and potentially huge set $C'$. The following lemma will tell us that while this may be true, we can split it into two parts, one of which is red to $C'$ and one which is red to the whole of $A$, as opposed just to $A'$.
To this end, let $D''$ be the set of vertices in $D'$, which send a blue edge towards $A$. See \Cref{fig:overall5} for an illustration.

\begin{figure}
\RawFloats
\begin{minipage}[t]{0.495\textwidth}
\centering
\captionsetup{width=\textwidth}
    \begin{tikzpicture}[yscale=0.8]
\defPt{0}{0}{D}
\defPt{-2}{0}{C}
\defPt{2}{0}{E}
\defPt{-1.5}{2}{A}
\defPt{1.5}{2}{B}
\defPt{0}{4}{X}

\foreach \i in {-2,-1,0,1,2}
{
    \foreach \j in {-1,0,1}
    {
    \draw[line width = 1 pt,blue] ($(X)+0.3*(\i,0)$) -- ($(A)+0.2*(\j,0)$);
    }
}

\foreach \i in {-2,-1,0,1,2}
{
    \foreach \j in {-1,0,1}
    {
    \draw[line width = 1 pt,blue] ($(X)+0.3*(\i,0)$) -- ($(B)+0.2*(\j,0)$);
    }
}

\foreach \i in {-2,-1,0,1,2}
{
    \foreach \j in {-2,-1,0,1,2}
    {
    \draw[line width = 1 pt,blue] ($(A)+0.3*(0,\i)$) -- ($(B)+0.3*(0,\j)$);
    }
}

\foreach \i in {-7,...,6}
{
    \foreach \j in {2,...,4}
    {
    \draw[line width = 0.75 pt,red] ($(A)+(0.05,0.125)+0.75*(0.06*\i,0.2*\i)$) -- ($(D)+(-0.7,0)+0.2*(\j,0)$);
    }
}

\foreach \i in {-3,...,3}
{
    \foreach \j in {-8,...,-1}
    {
    \draw[line width = 0.25 pt,red] ($(A)+(0.05,0.125)+0.75*(0.06*\i,0)$) -- ($(D)+(-1.25,0)+0.15*(\j,0)$);
    }
}

\foreach \i in {-4,...,4}
{
    \foreach \j in {-3,...,2}
    {
    \draw[line width = 0.5 pt,red] ($(B)+0.3*(-0.4*\i,0.8*\i)$) -- ($(E)+0.15*(\j,0)$);
    }
}

\fitellipsiss{$(A)+(0.2,0.5)$}{$(A)-(0.2,0.5)$}{0.6};
\fitellipsiss{$(B)+(-0.2,0.5)$}{$(B)-(-0.2,0.5)$}{0.6};

\foreach \i in {-4,...,4}
{
    \foreach \j in {2,...,4}
    {
    \draw[line width = 0.3 pt,red] ($(A)+(0.05,0.125)+0.1*(0,\i)$) -- ($(D)+(1,0)+0.2*(\j,0)$);
    }
}

\foreach \i in {-2,...,2}
{
    \foreach \j in {1,...,4}
    {
    \draw[line width = 0.5 pt,red] ($(B)-(0.05,-0.125)+0.2*(0,\i)$) -- ($(D)+(0,0)+0.2*(\j,0)$);
    }
}

\foreach \i in {-4,...,4}
{
    \foreach \j in {2,...,4}
    {
    \draw[line width = 0.3 pt,red] ($(B)-(0.05,-0.125)+0.1*(0,\i)$) -- ($(D)+(-2,0)+0.2*(\j,0)$);
    }
}

\foreach \i in {-2,...,2}
{
    \foreach \j in {-2,...,2}
    {
    \draw[line width = 0.2 pt,red] ($(C)-(-0.1,0.1)+0.07*(\i,0)$) to[in=-70,out=-110] ($(E)+(-1.17,-0.1)+0.07*(\j,0)$);
    }
}

\foreach \i in {-2,...,2}
{
    \foreach \j in {-2,...,2}
    {
    \draw[line width = 0.2 pt,red] ($(C)-(-0.1,0.1)+0.07*(\i,0)$) to[in=-70,out=-110] ($(E)+(-0.1,-0.1)+0.07*(\j,0)$);
    }
}

\fitellipsiss{$(A)+(0.1,0.25)$}{$(A)-(0.1,0.25)$}{0.4};
\fitellipsiss{$(B)+(-0.1,0.25)$}{$(B)-(-0.1,0.25)$}{0.4};

\fitellipsiss{$(C)$}{$(E)$}{0.6};
\fitellipsiss{$(X)+(0.5,0)$}{$(X)-(0.5,0)$}{0.6};

\draw[] ($(X)+(0.6,-0.5)$) -- ($(X)+(0.6,0.5)$);
\draw[] ($(X)+(0,-0.6)$) -- ($(X)+(0,0.6)$);
\draw[] ($(X)+(-0.6,-0.5)$) -- ($(X)+(-0.6,0.5)$);

\draw[] ($0.6*(C)+0.4*(D)+(0,-0.52)$) -- ($0.6*(C)+0.4*(D)+(0,0.52)$);
\draw[] ($0.6*(E)+0.4*(D)+(0,-0.52)$) -- ($0.6*(E)+0.4*(D)+(0,0.52)$);
\draw[] ($(D)+(0,-0.6)$) -- ($(D)+(0,0.6)$);

\pic[scale=0.15,rotate=15,blue] at ($(A)-(0.1,0.05)$) {conn};
\pic[scale=0.15,rotate=35,blue] at ($(B)-(0.1,0.15)$) {conn};
\pic[scale=0.1,red] at ($(X)-(0.9,0)$) {conn};
\pic[xscale=0.13, yscale=0.2,red] at ($(X)-(0.4,0.1)$) {conn};
\pic[scale=0.1,red] at ($(X)+(0.73,0)$) {conn};
\pic[xscale=0.13,yscale=0.2, red] at ($(X)+(0.2,-0.1)$) {conn};

\node[] at ($(X)+(-1.5,0)$) {$X$};
\node[] at ($(A)+(0.25,0.85)$) {$A'$};
\node[] at ($(B)+(-0.25,0.82)$) {$B'$};
\node[] at ($(A)+(-0.9,0.2)$) {$A$};
\node[] at ($(B)+(0.9,0.2)$) {$B$};
\node[] at ($(D)+(-0.59,-0.05)$) {\small $D'\setminus D''$};
\node[] at ($(D)+(0.59,0)$) {$D''$};

\node[] at ($0.9*(C)+0.1*(D)+(0,0)$) {$C'$};
\node[] at ($0.9*(E)+0.1*(D)+(0,0)$) {$E'$};

\end{tikzpicture}
    \caption{Final structure of the colouring}
    \label{fig:overall5}
\end{minipage}\hfill
\begin{minipage}[t]{0.495\textwidth}
\centering
\captionsetup{width=\textwidth}
\begin{tikzpicture}[scale=1.1, rotate=90]
\defPt{0}{-0.1}{x1}
\defPt{0}{1.3}{x0}
\defPt{1.5}{-1.2}{x2}
\defPt{2}{1}{x3}
\defPt{2.5}{-0.6}{x4}

\fitellipsiss{$(x2)$}{$(x4)$}{0.6};
\fitellipsiss{$(x3)+(0.5,-0.3)$}{$(x3)-(0.5,-0.3)$}{0.6};
\fitellipsiss{$(x1)+(0,0.2)$}{$(x1)-(0,0.4)$}{0.6};
\fitellipsiss{$(x0)+(0,0.2)$}{$(x0)+(0,0.8)$}{0.6};

\draw[line width = 2 pt, red]  (x3) -- (x0);
\draw[line width = 2 pt, red] (x0) -- (x4);
\draw[line width = 2 pt, red] (x4) -- (x1);
\draw[line width = 2 pt, red] (x1) -- (x2);

\draw[line width = 1 pt, blue] (x4) -- (x2);
\draw[line width = 1 pt, blue] (x4) -- (x3);
\draw[line width = 1 pt, blue] (x2) -- (x0);
\draw[line width = 1 pt, blue] (x1) -- (x3);
\draw[line width = 1 pt, blue] (x1) -- (x0);
\draw[line width = 1 pt, blue] (x2) -- (x3);

\node[] at ($(x1)+(-0.3,0)$) {$y$};
\node[] at ($(x0)+(-0.3,0)$) {$x$};
\node[] at ($(x2)+(-0.1,-0.25)$) {$b_2$};
\node[] at ($(x3)+(0.1,0.25)$) {$a$};
\node[] at ($(x4)+(0.1,-0.25)$) {$b_1$};
\node[] at ($(x3)+(0.4,0.9)$) {$A$};
\node[] at ($(x0)+(0,1.7)$) {$C'$};
\node[] at ($(x1)+(0,-1.3)$) {$D''$};
\node[] at ($0.5*(x2)+0.5*(x4)-(-0.4,0.9)$) {$B'$};

\foreach \i in {0,...,4}
{
\draw[] (x\i) \smvx;
}
\end{tikzpicture}
    \caption{$P_5$ leading to the contradiction in \Cref{claim-figure-1}}
    \label{fig:claim-figure-1}
\end{minipage}
\end{figure}

\begin{claim}\label{claim-figure-1}
$(D'', C'\cup B')$ is a red pair.
\end{claim}
\begin{cla_proof}
We have that $D''\subseteq D'$, so $D''$ is red to $B'$. It only remains to show that $D''$ is red to $C'$. Suppose towards a contradiction that there is a blue edge $xy$ with $x\in C'$ and $y\in D''$. By construction of $C'$, $x\in C'$ is not red to $B'$. Furthermore, $x$ can't be blue to $C'$ since otherwise, similarly as in the previous claim, $(A \cup \{x\},B'')$ would be a good pair with both parts of size larger than $A$, contradicting maximality of $(A,B)$. This means $x$ is inconsistent to $B'$ (which is connected in blue), consequently via \Cref{lem:cute-nice-obs} we can find vertices $b_1,b_2\in B'$ such that $b_1x$ is red, $b_2x$ is blue, and $b_1b_2$ is blue. Let $a\in A$ be a blue neighbour of $y$, which exists by the definition of $D''$. Then, $axb_1yb_2$ induces a red $P_5$. See \Cref{fig:claim-figure-1} for an illustration.
\end{cla_proof}


By its definition, we have that $E'$ is red to $B'$ and since $E'=E_r'$ by \Cref{cl:cb-empty}, we also know $E'$ is red to $C'$. Hence, by the previous claim, $(E'\cup D'', B'\cup C')$ is a red pair. Combining this with \eqref{eq:almost-evrything} and applying our second win condition \Cref{cl:win-purepair-2}, we conclude that
\begin{equation}\label{eq:Dprpr-Er-pr-tiny}
    |E'|+|D''| < n^{1-\eps_n/c}<|A|.
\end{equation}
Noticing that every vertex outside of $E'$ and $D''$ is consistent to $A$, this establishes one of our key goals. Namely, we found a set of at least medium size with the property that all but a small number of vertices are consistent to it. The final ingredient in the proof of our main claim is to show that our maximality assumption on $(A,B)$ guarantees small blue degrees from any vertex in $C \cup D \cup E$ to $B \cup X$. 

\begin{claim}\label{cl:small-deg-R0-to-B0}
The blue degree of any vertex $u \in C \cup D \cup E$ towards $B \cup X$ is less than 
$2n^{c\eps_n^2}|A|\le n^{\eps_n^2}|A|.$
\end{claim}
\begin{cla_proof}
Observe that, by \Cref{cl:X-claim}, $u$ sends a red edge towards $A$ since $u \notin X$, so $G[A \cup \{u\}]$ is connected in red. On the other hand, the set $B \cup X$ is blue to $A$. So if we let $N$ denote the blue neighbourhood of $u$ within $B \cup X$, there can be no red connected component $Y$ within $N$ of size larger than $A$ since otherwise $A \cup \{u\}$ would make a good pair with $Y$ which has both sides larger than $|A|$, contradicting maximality of the good pair $(A,B)$. See \Cref{fig:blue-deg} for an illustration.

This implies that every red connected component in $G[N]$ must have size at most $|A|$. This implies via \Cref{r-partite-bound} that 
\begin{align*}
    n^{c\eps_n}= f(n)> \floor{\frac{|N|}{|A|}} \cdot f(|A|) &\ge \frac{|N|}{2|A|}\cdot |A|^{c\eps_n}
                       \ge \frac{|N|}{2|A|} \cdot \left(n^{1-\eps_n}\right)^{c\eps_n}
                       = \frac{|N|}{2|A|} \cdot n^{c\eps_n-c\eps_n^2},
\end{align*}
where in the first inequality, we used the assumption that $|N| \ge |A|$, which we may assume as otherwise, the claim holds immediately. Rearranging the inequality and cancelling out the $n^{c\eps_n}$ gives the first desired inequality. The second follows from $c\eps_n^2 \le \eps_n^2/2$ and $2\le 1/\eps_n= (\log n)^{1/3}$.
\end{cla_proof}

\begin{figure}
\RawFloats
\begin{minipage}[t]{0.495\textwidth}
\captionsetup{width=\textwidth}
\centering
\begin{tikzpicture}[yscale=0.8]
\defPt{0}{0}{D}
\defPt{-2}{0}{C}
\defPt{2}{0}{E}
\defPt{-1.5}{2}{A}
\defPt{1.5}{2}{B}

\foreach \i in {-2,...,2}
{
    \foreach \j in {-3,...,3}
    {
    \draw[line width = 1 pt,blue] ($(A)+0.3*(0,\i)$) -- ($(B)+0.3*(0,\j)$);
    }
}

\fitellipsiss{$(C)$}{$(E)$}{0.6};
\fitellipsiss{$(A)+(0,0.4)$}{$(A)-(0,0.4)$}{0.4};
\fitellipsiss{$(B)+(-0,0.4)$}{$(B)-(-0,0.4)$}{0.9};

\foreach \i in {-2,...,2}
{
    \draw[line width = 1 pt,blue] ($(B)+0.4*(-0.1*\i,\i)$) -- ($(D)-(1,0)$);
}
\draw[line width = 1 pt,blue] ($(B)+(-0.4,0.8)$) -- ($(D)-(1,0)$);

\draw[line width = 1 pt,red] ($(A)+(-0.09,-0.2)$) -- ($(D)-(1,0)$);

\fitellipsiss{$(B)+(-0,0.5)$}{$(B)-(-0,0.5)$}{0.6};

\fitellipsiss{$(B)+(-0,0.21)$}{$(B)-(-0,0.6)$}{0.4};

\draw[] ($(D)-(1,0)$) \vx;

\pic[scale=0.18,rotate=15,red] at ($(A)-(0.15,0.15)$) {conn};

\pic[scale=0.18,rotate=15,red] at ($(B)-(0.15,0.35)$) {conn};

\node[] at ($(A)+(-0.9,0.1)$) {$A$};
\node[] at ($(B)+(0,0.83)$) {\small $N$};
\node[] at ($(B)+(1.5,0.1)$) {$B \cup X$};
\node[] at ($(C)+(-1.6,0)$) {$C \cup D \cup E$};
\node[] at ($(D)+(-1,-0.3)$) {$u$};

\end{tikzpicture}
    \caption{Illustration of the proof of \Cref{cl:small-deg-R0-to-B0}.}
    \label{fig:blue-deg}
\end{minipage}\hfill
\begin{minipage}[t]{0.495\textwidth}
\centering
\begin{tikzpicture}[yscale=0.8]
\defPt{0}{0}{D}
\defPt{-2}{0}{C}
\defPt{2}{0}{E}
\defPt{-1.5}{2}{A}
\defPt{1.5}{2}{B}

\fitellipsiss{$(C)$}{$(E)$}{1.5};
\draw[] ($(C)-(1.42,-0.3)$) -- ($(E)+(-1.5,0.3)$);
\draw[] ($(E)-(1.5,1.48)$) -- ($(E)+(-1.5,1.48)$);
\draw[] ($(C)+(-0.4,-1.1)$) -- ($(C)+(-0.4,1.1)$);
\draw[] ($(C)+(0.5,-1.36)$) -- ($(C)+(0.5,1.36)$);
\draw[] ($(C)+(1.6,-1.48)$) -- ($(C)+(1.6,1.48)$);

\foreach \i in {-6,-4,-2,0,2,4,6}
{
    \foreach \j in {-2,...,2}
    {
    \draw[line width = 0.1 pt,red] ($(E)-(1.5,0.5)+0.05*(0,\i)$) -- ($(C)+(3.2,0.5)+0.1*(\j,0)$);
    }
}

\foreach \i in {-6,...,6}
{
    \foreach \j in {-3,...,3}
    {
    \draw[line width = 0.5 pt,blue] ($(E)-(1.5,0.48)+0.05*(0,\i)$) -- ($(C)+(3.8,-0.5)+0.1*(\j,0)$);
    }
}

\foreach \i in {-4,...,4}
{
    \foreach \j in {-4,...,4}
    {
    \draw[line width = 0.1 pt,red] ($(C)+(3.8,0.5)+0.2*(\i,0)$) -- ($(C)+(3.8,-0.5)+0.2*(\j,0)$);
    }
}

\foreach \i in {-1,0,1}
{
    \foreach \j in {-1,0,1}
    {
    \draw[line width = 0.1 pt,blue] ($(C)+(3.8,0.5)+0.8*(\i,0)$) -- ($(C)+(3.8,-0.5)+0.8*(\j,0)$);
    }
}

\fitellipsiss{$(C)+(3.2,0.5)$}{$(E)+(0.5,0.5)$}{0.4};
\fitellipsiss{$(C)+(3.2,-0.5)$}{$(E)+(0.5,-0.5)$}{0.4};

\node[] at ($(C)+(-1.5,1)$) {$G_0=G$};
\node[] at ($(C)+(-0.7,-0.4)$) {$A_1$};
\node[] at ($(C)+(-0.7,0.6)$) {$E_1$};
\node[] at ($(C)+(0.05,-0.4)$) {$A_2$};
\node[] at ($(C)+(0.05,0.6)$) {$E_2$};
\node[] at ($(C)+(1.08,-0.4)$) {$\cdots$};
\node[] at ($(C)+(1.08,0.6)$) {$\cdots$};
\node[] at ($(C)+(2.05,-0.4)$) {$A_i$};
\node[] at ($(C)+(2.05,0.6)$) {$E_i$};
\node[] at ($(C)+(3.8,0.5)$) {$R_i$};
\node[] at ($(C)+(3.8,-0.5)$) {$B_i$};

\draw [thick,decorate,decoration={brace,amplitude=9pt,raise=4pt,mirror},yshift=3pt] ($(C)+(1.55,-1.5)$) -- ($(E)+(1.5,-0.57)$) node [black,midway,xshift=0.4cm,yshift=-0.6cm] {$G_i$};

\end{tikzpicture}

\captionsetup{width=\textwidth}
 \caption{Illustration of the iteration.}
    \label{fig:iteration}
\end{minipage}

\end{figure}

We are now ready to define the partition required by our main claim. Indeed, we choose $A(G') := A,\ Red(G') := C'\cup (D'\setminus D''),\ Blu(G') := B\cup X,$ and $Err(G') := D''\cup E'$. Note that this is indeed a partition of $V(G')$ by \Cref{cl:cb-empty} and note that $Red(G')$ is red to $A(G')$ by definition of the sets involved, while $Blu(G')=B \cup X$ is blue to $A$. So they satisfy the colouring requirements of our main claim. Note also that $|Err|=|D''\cup E'| < |A|$ by \eqref{eq:Dprpr-Er-pr-tiny} establishing property \ref{itm:d} of the main claim. Property \ref{itm:a} follows from \Cref{eq:A-size-bound} while the required size conditions follow from the lower bounds part of \Cref{cl:bounds}. The final remaining requirement concerning the blue degree from $Red(G')$ to $Blu(G')$ is satisfied thanks to \Cref{cl:small-deg-R0-to-B0} and completes the proof of the main claim.

We are now ready to iteratively use the main claim to complete the proof. We start by setting $G_0=G$ and then for all $i$, we define $A_i=A(G_i), B_i=Blu(G_i), R_i=Red(G_i)$ and $E_i=Err(G_i)$ and define $G_{i+1}=G_i \setminus (A_i \cup E_i)$ so long as $|G_i|\ge n-n^{1-\eps_n^2}$, which allows us to apply the main claim and find the partition. Let $t$ be the final value of $i$ in this process so that $|G_{t-1}|\ge n-n^{1-\eps_n^2}$ and $|G_t|< n-n^{1-\eps_n^2}$. This, in particular, guarantees 
\begin{equation}\label{eq:leftover-small}
|A_0 \cup \ldots \cup A_{t-1}| > |A_0 \cup \ldots \cup A_{t-1} \cup E_0 \cup \ldots \cup E_{t-1}|/2=|G \setminus G_t|/2 > n^{1-\eps_n^2}/2.
\end{equation}
Observe that by our first win condition \Cref{cl:win-purepair-1} applied to the pure pair $(A_i,B_i)$, using property \ref{itm:b} we conclude
\begin{equation}\label{eq:Ai_upper_bound}
    |A_i| \le n^{1-16\eps_n^2}.
\end{equation}
In particular, this implies 
\begin{align}\label{eq:removed_upper_bound}
    |A_0 \cup \ldots \cup A_{t-1} \cup E_0 \cup \ldots \cup E_{t-1}| &\le |A_0 \cup \ldots \cup A_{t-2} \cup E_0 \cup \ldots \cup E_{t-2}|+2|A_{t-1}| \nonumber \\
    &\le |G \setminus G_{t-1}|+2|A_{t-1}| \le n^{1-\eps_n^2}+2n^{1-16\eps_n^2}      
\end{align}

A final, key property of the sets $A_i$, which will follow from the conditions we ensured as part of the main claim, is that any two of them make a pure pair. We prove this as the following claim.

\begin{claim}\label{cl:A_i-A_j-consistent}
For any $0\le i < j\le t-1$, we have that $(A_i,A_j)$ is a pure pair. 
\end{claim}
\begin{cla_proof}
Note that, since $V(G_j) \subseteq B_i \cup R_i$, the claim is equivalent to $A_j\subseteq B_i$ or $A_j \subseteq R_i$.
Suppose towards a contradiction that this is not the case. Then, we can find vertices $r\in A_j\cap R_i$ and $b\in A_j\cap B_i$. 

Consider $B_j\subseteq R_i\cup B_i$. Note that $r$ is adjacent in blue to $B_j$ (by property \ref{itm:b} since $r$ belongs to $A_j$) and that since it belongs to $R_i$, property \ref{itm:c} implies that it has blue degree at most $n^{\eps_n^2}|A_i|$ towards $B_i$. Hence, we can conclude that $|B_i \cap B_j|$ is at most of this size. This, in turn, implies  that
\begin{equation}\label{eq:final1}
|R_i \cap B_j| \ge |B_j|- n^{\eps_n^2}|A_i| \ge 2n^{1-\eps_n^2} -n^{1-15\eps_n^2} \ge 2n^{1-16\eps_n^2},
\end{equation}
where in the second inequality we used property \ref{itm:b} and \eqref{eq:Ai_upper_bound}.

We next wish to lower bound the size of $N:=R_j \cap B_i$. To this end, note that $|R_j \cap B_i|=|B_i|-|B_i \setminus R_j|$. Since $r \in A_j$, it is red to $R_j$ and blue to $B_j$. Note further that $R_j \cup B_j$ contains all vertices not in $A_0 \cup \ldots \cup A_j \cup E_0 \cup \ldots \cup E_j$, so $B_i \setminus R_j = B_i \cap (B_j \cup A_0 \cup \ldots \cup A_j \cup E_0 \cup \ldots \cup E_j)$. To upper bound this, note that  
$$|B_i \cap (A_0 \cup \ldots \cup A_j \cup E_0 \cup \ldots \cup E_j)| \le |A_0 \cup \ldots \cup A_j \cup E_0 \cup \ldots \cup E_j| \le n^{1-\eps_n^2}+2n^{1-16\eps_n^2},$$
by \eqref{eq:removed_upper_bound}. Note also that, since $r \in R_i$, by property \ref{itm:c} it has at most $n^{\eps_n^2}|A_i|$ blue neighbours in $B_i$. Since $r$ is blue to $B_j$, this implies that $|B_i \cap B_j|\le n^{\eps_n^2}|A_i|.$ Putting this together, we obtain 
\begin{align}
\label{eq:final2}
|R_j \cap B_i| = |B_i|-|B_i \setminus R_j| &\ge |B_i|- |B_i \cap (A_0 \cup \ldots \cup A_j \cup E_0 \cup \ldots \cup E_j)|-|B_i \cap B_j| \notag \\ &\ge |B_i| - n^{1-\eps_n^2}-2n^{1-16\eps_n^2} - n^{\eps_n^2}|A_i| \ge 3n^{1-16\eps_n^2},
\end{align}
where we used \eqref{eq:Ai_upper_bound} to bound $|A_i|$. This allows us to apply \Cref{lem:one-third-pure-pair} and through our first win condition (\Cref{cl:win-purepair-1})
to find some blue connected component $N'$ in $G[N]$ of size at least $|N|/3$. Note also that $N' \subseteq N \subseteq R_j$ since $N=R_j\cap B_i$.

Let now $b'\in R_i\cap B_j$ be arbitrary and suppose first that it is not consistent to $N'$. Then via \Cref{lem:cute-nice-obs} we can find vertices $u,v\in N'$ such that $uv$ is blue, $b'u$ is red, and $b'v$ blue. Note that $bb'$ is blue because $b'\in B_j$ and $b\in A_j$. We also know that $bv$ and $bu$ are red because $b\in A_j$ is red to $N = R_j\cap B_i \supseteq \{u,v\}$. Finally, pick an arbitrary $x\in A_i$ and observe that this ensures it is red to $b' \in R_i$ and blue to $b,u,v \in B_i$. Then, $xb'ubv$ induces a $P_5$ leading to a contradiction. See \Cref{fig:A_i-A_j-consistent} for an illustration.

\begin{figure}
\RawFloats
\begin{minipage}[t]{0.655\textwidth}
\captionsetup{width=\textwidth}
\centering

\begin{tikzpicture}[xscale=1.3]
\defPt{0}{0}{D}
\defPt{-2}{0}{C}
\defPt{2}{0}{E}
\defPt{0}{3}{A}
\defPt{3}{3}{B}

\foreach \i in {-2,...,2}
{
    \foreach \j in {-5,...,3}
    {
    \draw[line width = 0.5 pt,red] ($(A)+0.3*(\i,0)$) -- ($(C)+0.3*(-0.5*\j,\j)$);
    }
}

\foreach \i in {-2,...,2}
{
    \foreach \j in {-5,...,3}
    {
    \draw[line width = 0.5 pt,blue] ($(A)+0.3*(\i,0)$) -- ($(E)+0.3*(0.5*\j,\j)$);
    }
}

\fitellipsiss{$(A)+(0.9,-1.3)$}{$(A)-(0.9,1.3)$}{0.5};
\fitellipsiss{$(C)+(0.5,-0.8)$}{$(C)+(0.5,0.8)$}{1.3};
\fitellipsiss{$(E)+(-0.5,-0.8)$}{$(E)+(-0.5,0.8)$}{1.3};
\fitellipsisss{$(A)+(0.9,-1.3)$}{$(A)-(0.9,1.3)$}{0.5};
\fitellipsiss{$(A)+(0.6,0)$}{$(A)-(0.6,0)$}{0.5};
\fitellipsiss{$(B)+(-0.3,0)$}{$(B)-(0.9,0)$}{0.5};
\fitellipsiss{$(C)+(0.3,-1.75)$}{$(E)+(-0.3,-1.75)$}{0.5};

    \foreach \j in {-3,...,5}
    {
    \draw[line width = 0.5 pt,blue] ($(A)+(-0.9,-1.4)$) -- ($(C)+(0.5,0.5)+0.15*(\j,-\j)$);
    }

    \foreach \j in {-6,...,7}
    {
    \draw[line width = 0.5 pt,blue] ($(A)+(0.9,-1.4)$) -- ($(C)+0.15*(0,-\j)$);
    }

    \foreach \j in {-5,...,3}
    {
    \draw[line width = 0.5 pt,red] ($(A)+(0.9,-1.4)$) -- ($(E)-(0.5,-0.5)+0.15*(\j,\j)$);
    }

    \foreach \j in {-6,...,7}
    {
    \draw[line width = 0.5 pt,red] ($(A)+(-0.9,-1.4)$) -- ($(E)+0.15*(0,-\j)$);
    }

\fitellipsiss{$(C)+(0.5,-0.3)$}{$(C)+(0.5,0.3)$}{0.9};
\fitellipsiss{$(E)+(-0.5,-0.3)$}{$(E)+(-0.5,0.3)$}{0.9};
\fitellipsiss{$(E)+(-0.5,0.3)$}{$(E)+(-0.5,0.3)$}{0.7};

\pic[xscale=0.26,yscale=0.2,rotate=180,blue] at ($(E)+(-0.32,0.4)$) {conn};

\draw[rounded corners] (-3.4, -2.3) rectangle (3.8, 3.7) {};

\draw[] ($(A)+(0.9,-1.4)$) \vx;
\draw[] ($(A)+(-0.9,-1.4)$) \vx;

\draw[line width = 1 pt,red] ($(C)+(1,0)$) -- ($(E)+(-0.8,0.2)$);

\draw[line width = 1 pt,blue] ($(C)+(1,0)$) -- ($(E)+(-0.8,0.62)$);

\draw[] ($(C)+(1,0)$) \vx;
\draw[] ($(E)+(-0.8,0.2)$) \vx;
\draw[] ($(E)+(-0.8,0.62)$) \vx;
\draw[] ($(A)+(0,0)$) \vx;

\node[] at ($0.5*(C)+0.5*(E)+(0,-1.75)$) {\small $A_{i+1} \cup \ldots \cup A_{j-1} \cup E_{i+1} \cup \ldots  \cup E_{j}$};

\node[] at ($(-3.6,3.9)$) {$G_i$};

\node[] at ($(C)+(0.7,0)$) {\small $b'$}; 
\node[] at ($(A)+(1.15,-1.3)$) {\small $b$}; 
\node[] at ($(A)+(-1.15,-1.3)$) {\small $r$}; 
\node[] at ($(A)+(0,-1.3)$) {$A_j$};
\node[] at ($(A)+(0,0.3)$) {\small $x$};
\node[] at ($(A)+(-1.5,0)$) {$A_i$};
\node[] at ($(B)+(0.5,0)$) {$E_i$};
\node[] at ($(C)+(-1.1,0)$) {$R_i$};
\node[] at ($(C)+(0.5,-0.75)$) {\small $R_i \cap B_j$};
\node[] at ($(E)+(-0.5,-0.75)$) {\small $R_j \cap B_i$};
\node[] at ($(E)+(-0.05,0.2)$) {\small $N'$};

\node[] at ($(E)+(-0.67,0.8)$) {\small $v$}; 
\node[] at ($(E)+(-0.8,-0.1)$) {\small $u$};

\node[] at ($(E)+(1.1,0)$) {$B_i$};
\end{tikzpicture}
    \caption{Illustration of the structure in \Cref{cl:A_i-A_j-consistent}.}
    \label{fig:A_i-A_j-consistent}
\end{minipage}\hfill
\begin{minipage}[t]{0.33\textwidth}
\centering
\begin{tikzpicture}[scale=1.3]
\defPt{0.1}{-0.5}{x1}
\defPt{1.2}{1.1}{x0}
\defPt{3}{-1.2}{x2}
\defPt{2.3}{0.4}{x3}
\defPt{3}{-0.6}{x4}

\fitellipsiss{$(x2)$}{$(x4)$}{0.5};
\fitellipsiss{$(x3)$}{$(x3)$}{0.5};
\fitellipsiss{$(x0)$}{$(x0)$}{0.5};
\fitellipsiss{$(x1)+(0,-0.3)$}{$(x1)+(0,0.3)$}{0.5};

\draw[line width = 2 pt, red]  (x4) -- (x3) -- (x2) -- (x1) -- (x0);

\draw[line width = 1 pt, blue] (x0) -- (x2);
\draw[line width = 1 pt, blue] (x0) -- (x4);
\draw[line width = 1 pt, blue] (x1) -- (x3);
\draw[line width = 1 pt, blue] (x3) -- (x0);
\draw[line width = 1 pt, blue] (x1) -- (x4);
\draw[line width = 1 pt, blue] (x4) -- (x2);

\node[] at ($(x1)+(-0.3,0)$) {$b'$};
\node[] at ($(x0)+(0,0.25)$) {$x$};
\node[] at ($(x2)+(0.15,-0.25)$) {$u$};
\node[] at ($(x3)+(0.2,0.25)$) {$b$};
\node[] at ($(x4)+(0.15,0.25)$) {$v$};
\node[] at ($0.5*(x2)+0.5*(x4)+(0.75,0)$) {$N$};
\node[] at ($(x0)+(0,0.75)$) {$A_i$};
\node[] at ($(x3)+(0.75,0)$) {$A_j$};
\node[] at ($(x1)+(0,-1.1)$) {$R_i \cap B_j$};

\foreach \i in {0,...,4}
{
\draw[] (x\i) \smvx;
}
\end{tikzpicture}
\captionsetup{width=\textwidth}
 \caption{$P_5$ found in \Cref{cl:A_i-A_j-consistent}}
    \label{fig:}
\end{minipage}
\end{figure}
Hence, any $b'\in R_i\cap B_j$ is consistent to $N'$. Keeping only the ones of majority colour gives us a pure pair with parts of size $|R_j \cap B_i|/2$ and $|N'|\ge |N|/3$ by \eqref{eq:final1} and \eqref{eq:final2}. This allows us to apply our first win condition, \Cref{cl:win-purepair-1}, one last time and obtain a contradiction.
\end{cla_proof}

Finally, note that by above claim and since $\log f(2^x)=cx^{2/3}$ is concave and increasing we may apply \Cref{r-partite-incons-bound} to the subgraph induced by $A_0, A_1, \ldots, A_t$ which consists of at least $s \ge \frac12 \cdot n^{1-\eps_n^2}$ vertices by \eqref{eq:leftover-small}. We also know by \ref{itm:a} and by \eqref{eq:Ai_upper_bound} that 
$$a:=n^{1-\eps_n} \le |A_i| \le n^{1-16\eps_n^2}:=b.$$
Note that this gives a contradiction unless $\eps_n \le 1/16$. With these parameters \Cref{r-partite-incons-bound} guarantees us a cograph of size at least either 
$$f\left(\frac{s}{2a \log (4b/a)}\right)\cdot f(a)\ge f\left(n^{\eps_n/2}\right)\cdot f(n^{1-\eps_n})\ge 2^{c/(2\eps_n^2)^{2/3}}  \cdot n^{c\eps_n-c\eps_n^2}\ge n^{c\eps_n^{5/3}/2}  \cdot n^{c\eps_n-c\eps_n^2}\ge n^{c\eps_n},$$
where in the second inequality, we used \eqref{eq:crit-G} for the first time in its stronger form (since we are applying it to a much smaller set than usual), and in the first inequality, we used 
$$\frac{s}{2a \log (4b/a)}\ge\frac{n^{1-\eps_n^2}/2}{2n^{1-\eps_n}\log (4n^{\eps_n-16\eps_n^2})}\ge \frac{n^{\eps_n-\eps_n^2}}{4\eps_n \log n} \ge n^{\eps_n/2}=2^{1/(2\eps_n^2)}.$$ 
Or \Cref{r-partite-incons-bound} gives us a cograph of size at least
$$f\left(\frac{s}{2b \log (4b/a)}\right)\cdot f(b)\ge f\left(n^{14\eps_n^2}\right)\cdot f(n^{1-16\eps_n^2}) \ge 2^{c( 14/\eps_n)^{2/3}}\cdot n^{c\eps_n-\eps_n^3}\ge n^{c\eps_n},$$ 
where in the last inequality we used $c(14/\eps_n)^{2/3}\ge 1$ and in the first inequality we used $$\frac{s}{2b \log (4b/a)}\ge \frac{n^{1-\eps_n^2}/2}{2n^{1-16\eps_n^2}\log (4n^{\eps_n-16\eps_n^2})}\ge \frac{n^{15\eps_n^2}}{4\eps_n \log n} \ge n^{14\eps_n^2}=2^{14/\eps_n}.$$ In either case, we found a cograph of size at least $f(n)$, which is a contradiction, completing the proof.
\end{proof}

\section{Concluding remarks}\label{sec:conc-remarks}
In this paper, we improved the best known bound on the size of a homogeneous set in a $P_5$-free graph from $2^{\Omega(\sqrt{\log n})}$ to $2^{\Omega((\log n)^{2/3})}$. We note that prior to this work, there has been no instance of the Erd\H{o}s-Hajnal conjecture in which such an intermediate result was known. The only instance in which any kind of intermediate result in this direction was known is the $C_5$-free case \cite{towards-5-hole}, which was quickly followed by a full resolution of that case in \cite{5-hole}.

While falling short of the full proof of the conjecture, we do believe several of our ideas could be very useful in resolving the conjecture in full. In fact, we deliberately wrote the proof in the form of trying to find a cograph of size $n^{\eps_n}$ (where $\eps_n= \frac{1}{(\log n)^{1/3}}$) to showcase that our arguments do apply for quite some time and already establish quite a lot of structure even if one only works with $\eps_n$ being a constant. Namely, the current bottleneck for our argument is our first ``win'' condition (\Cref{cl:win-purepair-1}), which states that we find a large enough cograph (so also a homogenous set) provided we find a pure pair with both parts of size at least $n^{1-O(\eps_n^2)}$. On the other hand, our second ``win'' condition (\Cref{cl:win-purepair-2}) which states that we find a large enough cograph if we can find a pure pair with one part of size at least $n^{1-O(\eps_n)}$ and the other $n-n^{1-\Omega(\eps_n^2)}$ applies even if $\eps_n$ is a constant. In fact, by considering a certain very unbalanced blow-up of $C_5$, where we place optimal $P_5$-free graphs inside of each part and choose the largest part to have size $n(1-o(1))$ shows that one may be forced to consider such extremely unbalanced pairs. On the other hand, our argument is precisely geared towards showing critical examples actually need to resemble this construction. 

We note that while in this paper we focused on the case of $P_5$-free graphs, a number of our ideas do apply when working with more general forbidden graphs. That said, a number of ideas that lead to breaking the $2^{\Omega(\sqrt{\log n \log \log n})}$ barrier are fairly specific to $P_5$. However, as we discussed in the introduction already, matching this barrier is a highly non-trivial task despite only being a modest-seeming improvement and we would find a result establishing it for all graphs to be a very interesting, natural and perhaps a bit more approachable step towards proving the Erd\H{o}s-Hajnal Conjecture in full. 
\begin{prob}
Given any graph $H$ show that for any $n$-vertex $H$-free graph $G$ we have  $\hom(G) \ge 2^{\Omega(\sqrt{\log n \log \log n})}$.
\end{prob}
It is not too hard to extend our ideas slightly and to prove such a result for an infinite family of prime graphs (not obtainable by substitution). Since, in this paper, the focus was on breaking the barrier, we left this direction for future work. 
This question has subsequently been resolved by the second author, Nguyen, Scott and Seymour \cite{tung-paper}. 

In terms of breaking the barrier, the $P_5$-free case is the only one in which we have such an intermediate result towards the Erd\H{o}s-Hajnal conjecture. Here, the case of longer paths seems like the most natural next step since a number of our ideas in this direction could be useful to extend to longer paths.  

\begin{prob}\label{prob:long-path}
Let $k$ be a positive integer. Show that there exists an $\eps>0$ such that any $P_k$-free $n$-vertex graph $G$ has $\hom(G) \ge 2^{\Omega((\log n)^{1/2+\eps})}.$
\end{prob}

\textbf{Note added in proof.} We note that while this paper was under review \Cref{prob:long-path} has been resolved in a spectacular fashion by Nguyen, Scott and Seymour \cite{long-paths} who show a much stronger bound of $2^{(\log n)^{1-o(1)}}$ which in particular also improves our main result \Cref{thm:main}. 

\textbf{Acknowledgements.} We would like to thank Noga Alon, Maria Chudnovsky, Jacob Fox, Matthew Kwan, Tung Nguyen, Alex Scott and Paul Seymour for their useful comments. We are also very grateful to the anonymous referees for their very careful reading, plenty of suggestions on how to improve the manuscript, and minor corrections. The first author would like to thank the Princeton University Department of Mathematics for the support provided through the Summer Undergraduate Research Program 2022. The second author would like to gratefully acknowledge the support of the Oswald Veblen Fund.



\providecommand{\MR}[1]{}
\providecommand{\MRhref}[2]{%
  \href{http://www.ams.org/mathscinet-getitem?mr=#1}{#2}
}


\begin{thebibliography}{10}

\bibitem{noga-pairs}
N.~Alon, J.~Pach, R.~Pinchasi, R.~Radoi\v{c}i\'{c}, and M.~Sharir,
  \emph{Crossing patterns of semi-algebraic sets}, J. Combin. Theory Ser. A
  \textbf{111} (2005), no.~2, 310--326. \MR{2156215}

\bibitem{APS01}
N.~Alon, J.~Pach, and J.~Solymosi, \emph{Ramsey-type {T}heorems with
  {F}orbidden {S}ubgraphs}, Combinatorica \textbf{21} (2001), no.~2, 155--170,
  Paul Erd\H{o}s and his mathematics (Budapest, 1999). \MR{1832443}

\bibitem{alon-spencer}
N.~Alon and J.~H. Spencer, \emph{The {P}robabilistic {M}ethod 4th ed.}, John
  Wiley \& Sons, 2016.

\bibitem{thomasse-pure-pairs}
M.~Bonamy, N.~Bousquet, and S.~Thomass\'{e}, \emph{The {E}rd\"{o}s-{H}ajnal
  conjecture for long holes and antiholes}, SIAM J. Discrete Math. \textbf{30}
  (2016), no.~2, 1159--1164. \MR{3507546}

\bibitem{p5-work-1}
N.~Bousquet, A.~Lagoutte, and S.~Thomass\'{e}, \emph{The {E}rd{\H{o}}s-{H}ajnal
  conjecture for paths and antipaths}, J. Combin. Theory Ser. B \textbf{113}
  (2015), 261--264. \MR{3343757}

\bibitem{tung-paper}
M.~Buci\'c, T.~Nguyen, A.~Scott, and P.~Seymour, \emph{Induced subgraph
  density. {I}. {A} loglog step towards {E}rd{\H{o}}s-{H}ajnal}, arXiv preprint
  2301.10147 (2023).

\bibitem{towards-5-hole}
M.~Chudnovsky, J.~Fox, A.~Scott, P.~Seymour, and S.~Spirkl, \emph{Towards
  {E}rd{\H{o}}s-{H}ajnal for graphs with no 5-hole}, Combinatorica \textbf{39}
  (2019), no.~5, 983--991. \MR{4039598}

\bibitem{pure-pair-3}
M.~Chudnovsky, J.~Fox, A.~Scott, P.~Seymour, and S.~Spirkl, \emph{Pure pairs.
  {III}. {S}parse graphs with no polynomial-sized anticomplete pairs}, J. Graph
  Theory \textbf{95} (2020), no.~3, 315--340. \MR{4174117}

\bibitem{bull}
M.~Chudnovsky and S.~Safra, \emph{The {E}rd{\H{o}}s-{H}ajnal conjecture for
  bull-free graphs}, J. Combin. Theory Ser. B \textbf{98} (2008), no.~6,
  1301--1310. \MR{2462320}

\bibitem{pure-pair-1}
M.~Chudnovsky, A.~Scott, P.~Seymour, and S.~Spirkl, \emph{Pure pairs. {I}.
  {T}rees and linear anticomplete pairs}, Adv. Math. \textbf{375} (2020),
  107396, 20. \MR{4170220}

\bibitem{5-hole}
M.~Chudnovsky, A.~Scott, P.~Seymour, and S.~Spirkl,
  \emph{{E}rd{\H{o}}s-{H}ajnal for graphs with no 5-hole}, arXiv preprint
  2102.04994 (2021).

\bibitem{pure-pair-2}
M.~Chudnovsky, A.~Scott, P.~Seymour, and S.~Spirkl, \emph{Pure pairs. {II}.
  {E}xcluding all subdivisions of a graph}, Combinatorica \textbf{41} (2021),
  no.~3, 379--405. \MR{4282718}

\bibitem{p5-work-2}
M.~Chudnovsky and P.~Seymour, \emph{Excluding paths and antipaths},
  Combinatorica \textbf{35} (2015), no.~4, 389--412. \MR{3386051}

\bibitem{p5-work-3}
M.~Chudnovsky and P.~Seymour, \emph{Erd{\H{o}}s-{H}ajnal for cap-free graphs},
  J. Combin. Theory Ser. B \textbf{151} (2021), 417--434. \MR{4299070}

\bibitem{p5-work-4}
M.~Chudnovsky and Y.~Zwols, \emph{Large cliques or stable sets in graphs with
  no four-edge path and no five-edge path in the complement}, J. Graph Theory
  \textbf{70} (2012), no.~4, 449--472. \MR{2957058}

\bibitem{erdos-hajnal-1}
P.~Erd{\H{o}}s and A.~Hajnal, \emph{On spanned subgraphs of graphs},
  Contributions to graph theory and its applications ({I}nternat. {C}olloq.,
  {O}berhof, 1977) ({G}erman), Tech. Hochschule Ilmenau, Ilmenau, 1977,
  pp.~80--96. \MR{599767}

\bibitem{erdos-hajnal-2}
P.~Erd{\H{o}}s and A.~Hajnal, \emph{Ramsey-type theorems}, Discrete Appl. Math.
  \textbf{25} (1989), no.~1-2, 37--52, Combinatorics and complexity (Chicago,
  IL, 1987). \MR{1031262}

\bibitem{E-S}
P.~Erd{\H o}s and G.~Szekeres, \emph{A combinatorial problem in geometry},
  Compositio Math. \textbf{2} (1935), 463--470.

\bibitem{pach-pairs}
J.~Fox and J.~Pach, \emph{Erd{\H{o}}s--{H}ajnal-type results on intersection
  patterns of geometric objects}, Horizons of combinatorics, Bolyai Soc. Math.
  Stud., vol.~17, Springer, Berlin, 2008, pp.~79--103. \MR{2432528}

\bibitem{gyarfas-survey}
A.~Gy\'{a}rf\'{a}s, \emph{Reflections on a problem of {E}rd{\H{o}}s and
  {H}ajnal}, The mathematics of {P}aul {E}rd{\H{o}}s, {II}, Algorithms Combin.,
  vol.~14, Springer, Berlin, 1997, pp.~93--98. \MR{1425208}

\bibitem{long-paths}
T.~Nguyen, A.~Scott, and P.~Seymour, \emph{Induced subgraph density. {V}. {A}ll
  paths approach {E}rd{\H{o}}s-{H}ajnal}, arXiv preprint 2307.15032 (2023).

\bibitem{pach-tomon}
J.~Pach and I.~Tomon, \emph{Erd{\H{o}}s-{H}ajnal-type results for ordered
  paths}, arXiv preprint 2004.04594 (2020).

\bibitem{scott2021polynomial}
A.~Scott, P.~Seymour, and S.~Spirkl, \emph{Polynomial bounds for chromatic
  number. {IV}. a near-polynomial bound for excluding the five-vertex path},
  arXiv preprint 2110.00278 (2021).

\bibitem{seinsche}
D.~Seinsche, \emph{On a property of the class of {$n$}-colorable graphs}, J.
  Combinatorial Theory Ser. B \textbf{16} (1974), 191--193. \MR{337679}

\bibitem{tomon}
I.~Tomon, \emph{String graphs have the {E}rd{\H{o}}s-{H}ajnal property}, arXiv
  preprint 2002.10350 (2020).

\end{thebibliography}

\providecommand{\bysame}{\leavevmode\hbox to3em{\hrulefill}\thinspace}
\providecommand{\MR}{\relax\ifhmode\unskip\space\fi MR }
\providecommand{\MRhref}[2]{%
  \href{http://www.ams.org/mathscinet-getitem?mr=#1}{#2}
}
\providecommand{\href}[2]{#2}

\appendix
\section{Substitution for subpolynomial Erd\H{o}s-Hajnal bounds}\label{sec:substitution}
Given graphs $H, F$ and a vertex $v$ of $H$ we denote by $H_v(F)$ the graph obtained by replacing $v$ with an induced copy of $F$ and joining it to the remaining vertices of $H$ in the same manner as $v$. This operation is called \emph{substitution} and a beautiful result of Alon, Pach and Solymosi \cite{APS01} states that the family of graphs for which the Erd\H{o}s-Hajnal conjecture holds is closed under substitution, in the sense that if it holds for graphs $H, F$, then for any vertex $v$ of $H$, it also holds for $H_v(F)$. In this section, we will show, for the sake of completeness and to provide a reference, that as long as a function $f$ is ``well-behaved'', the same result holds for the family of graphs $H$ with the property that there exists an $\eps=\eps(H,f) >0$ such that any $n$-vertex $H$-free graph has a homogenous set of size at least $(f(n))^{\eps}$. We will refer to a graph $H$ with such a property as an \emph{$f$-Erd\H{o}s-Hajnal} graph (regardless of the properties of $f$). We note that variants of this statement are well-known to hold in the area and that the formulation we use here is based on a variant that appeared in an early draft of \cite{tung-paper}. 

Let us first formalise what we mean by well-behaved above. A function $f: [1,\infty) \to \mathbb{R}^+$ is said to be \emph{almost super multiplicative} if there exists an $1\ge \eps>0$ such that for all $x,y \in [1,\infty)$ we have $f(x)f(y) \ge (f(xy))^{\eps}.$ For example, setting $\eps=1$ precisely recovers super multiplicativity, while $f(x)=2^{(\log x)^{2/3}}$ is almost super multiplicative with $\eps=\frac12$ since 
$$2^{(\log x)^{2/3}+(\log y)^{2/3}} \ge 2^{\frac12 (\log x+\log y)^{2/3}}\ge 2^{\frac12(\log(xy))^{2/3}}.$$ 

The following lemma extends the result of Alon, Pach and Solymosi \cite{APS01} and follows by essentially the same proof. It says that for any almost super multiplicative function $f$, the class of $f$-Erd\H{o}s-Hajnal graphs is closed under substitution. 
\begin{lem}[\cite{tung-paper}]\label{subpoly-E-H}
Let $f$ be an almost super multiplicative increasing function and $H, F$ be $f$-Erd\H{o}s-Hajnal graphs. Then, for any $v \in V(H)$ the graph $H_v(F)$ is also $f$-Erd\H{o}s-Hajnal. 
\end{lem}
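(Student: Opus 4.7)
The plan is to adapt the classical substitution argument of Alon, Pach and Solymosi \cite{APS01} to the quantitative setting of a subpolynomial function $f$. Write $h := |V(H)|$ and let $\eps_H, \eps_F > 0$ be the $f$-Erd\H{o}s-Hajnal exponents of $H$ and $F$, and $\delta > 0$ the constant witnessing the almost super multiplicativity of $f$. The goal is to exhibit a small $\eps = \eps(\eps_H, \eps_F, \delta, h) > 0$ such that every $n$-vertex $H_v(F)$-free graph $G$ has $\hom(G) \ge f(n)^{\eps}$, proceeding by contradiction: assume $G$ is such a graph with $m := \hom(G) < f(n)^{\eps}$.

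The first step is to show that $G$ contains many induced copies of $H$. Since the $f$-Erd\H{o}s-Hajnal property of $H$ forces any $H$-free subset of size $s$ to contain a homogeneous set of size $f(s)^{\eps_H}$, and we assumed $\hom(G) < f(n)^\eps$, any $s$-subset of $G$ with $f(s)^{\eps_H} > f(n)^\eps$ must contain an induced $H$. Using the almost super multiplicativity of $f$, one can verify that a choice $s = n^{\alpha}$ with $\alpha \in (0,1)$ suitably small in terms of $\eps/\eps_H$ and $\delta$ achieves this. A standard double-counting (each $s$-subset contains an induced $H$, each induced $H$ is contained in $\binom{n-h}{s-h}$ such subsets) then yields at least $(n/s)^h / h^h$ induced copies of $H$ in $G$.

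By pigeonholing over the choices of $(h-1)$ vertices playing the roles of $V(H) \setminus \{v\}$ in these copies, we fix a tuple $(u_2, \dots, u_h)$ inducing $H - v$ such that the set $W$ of vertices $w$ for which $\{w, u_2, \dots, u_h\}$ induces a copy of $H$ (with $w$ corresponding to $v$) satisfies $|W| \ge n / (h \cdot s^{h-1})$. The crucial structural fact is that $G[W]$ is $F$-free: an induced $F$ in $G[W]$, together with $u_2, \dots, u_h$, would furnish an induced $H_v(F)$ in $G$. Applying $F$'s $f$-Erd\H{o}s-Hajnal property gives $\hom(G) \ge \hom(G[W]) \ge f(|W|)^{\eps_F}$, and the almost super multiplicativity of $f$ bounds $f(|W|)$ from below in terms of $f(n)$ and $f(s)$, yielding a contradiction with $\hom(G) < f(n)^\eps$ once $\eps$ is small enough.

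The main obstacle is the quantitative balancing. We must choose $\eps$ small enough that $s = n^{\alpha}$ can be selected with (i) $f(s)^{\eps_H} > f(n)^\eps$, forcing every $s$-subset to contain an induced $H$, and simultaneously (ii) $f(n/s^{h-1})^{\eps_F} > f(n)^\eps$, so that the extension set $W$ delivers a homogeneous set beating the assumed bound. The almost super multiplicativity of $f$ is precisely the structural hypothesis that lets us relate $f$ at polynomially-related scales such as $n$, $n^\alpha$, and $n/s^{h-1}$, making both inequalities simultaneously achievable.
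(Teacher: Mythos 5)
Your proposal is correct and follows essentially the same Alon--Pach--Solymosi substitution argument as the paper: show that subsets of a suitable size $s$ force an induced $H$, double-count to find many copies of $H$, pigeonhole over copies of $H - v$, observe the extension set is $F$-free, and invoke $F$'s $f$-Erd\H{o}s-Hajnal property. The only cosmetic difference is that the paper defines $s$ as the minimum size forcing an induced $H$ and then case-splits on whether this threshold is large, whereas you fix $s = n^{\alpha}$ up front and use almost super-multiplicativity to verify that this choice forces $H$; both lead to the same conclusion with the same quantitative flavor.
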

\begin{proof}
If $|H|=1$, the theorem is immediate, so let us assume $|H|\ge 2$.
Let $\eps_H:=\eps(H,f)$ and $\eps_F=\eps(F,f)$, and $\eps_f$ be the value of $\eps$ with which $f$ is almost super multiplicative. 

Let $\eps=\min \left \{\frac{\eps_H(\eps_f)^{4|H|}}{4|H|},\frac{\eps_F\eps_f}{2}\right\}.$ We will show that any $n$-vertex graph $G$ has $m:=\hom(G)\ge (f(n))^{\eps}.$ This is trivially satisfied if $f(n)<1$ so let us assume $f(n) \ge 1$. Let $s$ be the minimum number with the property that any $s$ vertices of $G$ contain a copy of $H$. Since $s\ge |H|\ge 2$, we can only have $s = 2$ if $G$ is a homogenous set itself and $m=n \ge (f(n))^{\eps_H}\ge (f(n))^{\eps},$ where in the first inequality we used the assumption that $H$ is $f$-Erd\H{o}s-Hajnal. So we may assume $s \ge 3$.

Note further that by $H$ being $f$-Erd\H{o}s-Hajnal, and the definition of $s$ we know $(f(s-1))^{\eps_H}\le m.$ Now if $s > {n}^{\frac1{4|H|}}+1,$ then we get  
$$m\ge (f(s-1))^{\eps_H}> \left(f\left({n}^{1/{(4|H|)}}\right)\right)^{\eps_H}\ge \left(f\left({n}\right)\right)^{{\eps_H(\eps_f)^{4|H|}}/{(4|H|)}}\ge (f(n))^{\eps},$$
as desired. Here, in the first inequality, we used that $f$ is increasing, and in the second, we used the almost super multiplicativity of $f$. So we are left with the case when $3\le s \le {n}^{1/{(4|H|)}}+1.$ This implies ${n}^{1/{(4|H|)}} \ge 2$, which in turn gives $s\le {n}^{1/{(4|H|)}}+1 \le {n}^{1/{(2|H|)}}.$

On the other hand, we know that there are more than $$\frac{\binom{n}{s}}{\binom{n-|H|}{s-|H|}}> \left(\frac{n}{s}\right)^{|H|}$$ copies of $H$ in $G$. 
This implies there is a copy of $H \setminus \{v\}$ in $G$ which has at least ${n}/{s^{|H|}}$ extensions into $H$. In other words, there exists $H' \subseteq G$, isomorphic to $H \setminus \{v\}$ and a set $S \subseteq G$ of at least ${n}/{s^{|H|}}$ vertices, with the property that for every $u \in V(H')$ if its corresponding vertex was adjacent to $v$ in $H$ then $u$ is complete to $S$ and is anticomplete otherwise. In particular, for any $w \in S$ we have $V(H') \cup w$ induce a copy of $H$. This implies that $G[S]$ is $F$-free, as any copy of $F$ inside of $S$ would give rise to a copy of $H_v(F)$ in $G$. Since $F$ is $f$-Erd\H{o}s-Hajnal, this implies 
$$m \ge (f(|S|))^{\eps_F}> \left(f\left(\frac{n}{s^{|H|}}\right)\right)^{\eps_F} \ge \left(f\left(\sqrt{n}\right)\right)^{\eps_F} \ge (f(n))^{\eps_F \eps_f/2}\ge (f(n))^{\eps}$$
where we used that $f$ is increasing in both the second and third inequality, where in the third inequality, we also used the above upper bound on $s$, and we used almost super multiplicativity in the penultimate inequality. 
\end{proof}

An immediate corollary of this result combined with \Cref{thm:main} is that, given any graph $H$ which can be obtained by iterative substitution using in each step $P_5$ or any Erd\H{o}s-Hajnal graph, then any $n$-vertex $H$-free graph $G$ has $\hom(G) \ge 2^{\Omega((\log n)^{2/3})}.$
\end{document}